\documentclass{article}
\usepackage[T1]{fontenc}
\usepackage{graphicx}
\usepackage[utf8]{inputenc}
\usepackage{booktabs}
\usepackage[ams,todos]{CMImacros}
\usepackage{amsmath}
\usepackage{amsfonts}
\usepackage{amssymb}
\usepackage{array}
\usepackage{booktabs}
\usepackage{amsthm}
\usepackage{enumerate}
\usepackage{cleveref}
% \fxsetup{final}
\usepackage{orcidlink}
\usepackage{cleveref}
\usepackage{hyperref}
\hypersetup{
    breaklinks=true,
    colorlinks=true,     % <-- use colored text instead of boxes
    linkcolor=blue,      % <-- color for internal links (sections, TOC)
    citecolor=blue,      % <-- color for citations
    urlcolor=blue
}

\usepackage{titlesec}

\titleformat{\subsubsection}
  {\normalfont\scshape}  % small caps, no bold/italic/underline
  {}                     % no label/numbering
  {1em}                  % horizontal space before heading text
  {}                     % no prefix code
  [\vspace{0.5em}]       % space after heading to break the line

\titlespacing*{\subsubsection}
  {0pt}{1.5em}{0.5em}    % {left}{before}{after} spacing

\usepackage[format=plain,justification=justified]{caption}
\captionsetup[figure]{width=0.75\textwidth}

\usepackage[format=plain,justification=justified]{caption}

% Set figure captions to 75% width
\captionsetup[figure]{width=0.75\textwidth}

% Set table captions to 75% width and place them above tables
\captionsetup[table]{width=0.75\textwidth, position=above}

\usepackage[backend=bibtex,maxbibnames=99,firstinits=true,style=numeric,url=false,sorting=none]{biblatex}

% Setup of referencing 
\DeclareFieldFormat[article]{title}{#1} % print title literally
\DeclareFieldFormat{title}{#1}
\DeclareFieldFormat{journaltitle}{#1}
\DeclareFieldFormat{booktitle}{#1}
\DeclareFieldFormat{pages}{#1}
\DeclareFieldFormat[inproceedings]{title}{#1}

% Author formatting
\DeclareNameAlias{author}{given-family}
\renewcommand*{\labelnamepunct}{\addcolon\space}

% Remove default "In:"
\renewbibmacro{in:}{}

% Clear unused fields
\AtEveryBibitem{%
  \clearfield{month}%
  \clearfield{isbn}%
  \clearfield{issn}%
  \clearfield{address}%
  \clearfield{note}%
  \clearfield{language}%
  \ifentrytype{article}{}{%
    \clearfield{publisher}%
  }%
}

% ---------------------- Force literal article titles ----------------------
\renewbibmacro*{title}{%
  \ifentrytype{article}{%
    \printfield{title}%
  }{%
    \printfield[title]{title}%
  }%
}

\renewbibmacro*{title}{%
  \ifentrytype{thesis}{%
    \printfield{title}%
  }{%
    \printfield[title]{title}%
  }%
}

% ---------------------- DOI printing ----------------------
% This now creates the full URL and allows line breaks.
% It replaces your \PrintDOI command.
\DeclareFieldFormat{doi}{%
  % disable comma
  \adddot\space
  \url{https://doi.org/#1}%
}

% ---------------------- Article driver ----------------------
\DeclareBibliographyDriver{article}{%
  \usebibmacro{bibindex}%
  \usebibmacro{begentry}%
  % Authors
  \printnames{author}%
  \setunit{\labelnamepunct}\space
  % Title
  \usebibmacro{title}%
  \adddot\space
  % Journal
  \printfield{journaltitle}%
  \setunit{\space}%
  % Volume in bold
  \mkbibbold{\printfield{volume}}%
  \setunit{\addcomma\space}%
  % Pages
  \printfield{pages}%
  % Year in parentheses
  \setunit{\space}%
  \printtext[parens]{\printfield{year}}%
  % DOI at the end
  \newunit\newblock
  \iffieldundef{doi}{}{%
    \printfield{doi}% <--- MODIFIED
  }%
  \usebibmacro{finentry}%
}

% ---------------------- Book driver ----------------------
\DeclareBibliographyDriver{book}{%
  \usebibmacro{bibindex}%
  \usebibmacro{begentry}%
  \printnames{author}%
  \setunit{\labelnamepunct}\space
  \printfield{title}%
  \adddot\space
  \printlist{publisher}%
  \setunit{\addcomma\space}%
  \printlist{location}%
  \setunit{\space}%
  \printtext[parens]{\printfield{year}}%
  \usebibmacro{finentry}%
}

% ---------------------- Incollection (book chapter) ----------------------
\DeclareBibliographyDriver{incollection}{%
  \usebibmacro{bibindex}%
  \usebibmacro{begentry}%
  \printnames{author}%
  \setunit{\labelnamepunct}\space
  \printfield{title}%
  \adddot\space
  \printtext{In: }%
  \printnames{editor}%
  \setunit{\addcomma\space}%
  \printtext{(eds.) }%
  \printfield{booktitle}%
  \setunit{\addcomma\space}%
  \printfield{pages}%
  \adddot\space
  \printlist{publisher}%
  \setunit{\addcomma\space}%
  \printlist{location}%
  \setunit{\space}%
  \printtext[parens]{\printfield{year}}%
  \usebibmacro{finentry}%
}

% ---------------------- Preprint / arXiv driver ----------------------
\DeclareBibliographyDriver{misc}{%
  \usebibmacro{bibindex}%
  \usebibmacro{begentry}%
  \printnames{author}%
  \setunit{\labelnamepunct}\space
  \printfield{title}%
  \adddot\space
  \iffieldundef{journal}{%
    \iffieldundef{eprint}{}{%
      \printtext{arXiv:\space\printfield{eprint}}%
    }%
  }{%
    \printfield{journal}%
  }%
  \setunit{\space}%
  \printtext[parens]{\printfield{year}}%
  \usebibmacro{finentry}%
}

% ---------------------- Inproceedings (Conference Paper) driver ----------------------
\DeclareBibliographyDriver{inproceedings}{%
  \usebibmacro{bibindex}%
  \usebibmacro{begentry}%
  \printnames{author}%
  \setunit{\labelnamepunct}\space% <--- Uses the new definition (now just a space)
  \printfield{title}%
  \adddot
  % Removed \adddot\space here!
  \setunit{\addspace}\space
  \printtext{In: }%
  \printfield{booktitle}%
  \setunit{\addcomma\space}%
  \printfield{pages}%
  \setunit{\adddot\space} % Use a period before the publisher/year block
  \printlist{publisher}%
  \setunit{\space}%
  \printtext[parens]{\printfield{year}}%
  \usebibmacro{finentry}%
}

% ---------------------- PhD Thesis driver ----------------------
\DeclareFieldFormat[phdthesis]{title}{#1}
\DeclareFieldFormat[thesis]{title}{#1}
\DeclareBibliographyAlias{thesis}{phdthesis}

\DeclareBibliographyDriver{phdthesis}{%
  \usebibmacro{bibindex}%
  \usebibmacro{begentry}%
  % Author
  \printnames{author}%
  \setunit{\labelnamepunct}\space
  % Title
  \printfield[titlecase]{title}%
  \adddot\space
  % Type
  \printfield{type}%
  \space
  % School / Institution
  \printfield{school}%
  %\setunit{\space}%
  \printtext[parens]{\printfield{year}}%
  \usebibmacro{finentry}%
}

% Package for multiple authors and affiliations
\usepackage{authblk}
\usepackage[margin=1.2in]{geometry} 

%\graphicspath{{./figures/}} % define figure directory
\setcounter{topnumber}{1}
\setcounter{dbltopnumber}{1}
\setcounter{bottomnumber}{1}

\newcommand{\cfeldesy}{Center for Free-Electron Laser Science CFEL, Deutsches
      Elektronen-Synchrotron DESY, Notkestr. 85, 22607 Hamburg, Germany}%

\newcommand{\uhhmaths}{Department of Mathematics, Universität 
Hamburg, Bundesstr. 55,
      20146, Hamburg, Germany}%

      \newcommand{\uhhphys}{Department of Physics, Universität 
Hamburg, Luruper Chaussee 149, 22761 Hamburg, Germany}%

%

%

%

% setup theorem-like environments

\newtheorem{theorem}{Theorem}[section]
\newtheorem{lemma}{Lemma}[section]

\theoremstyle{definition}

\theoremstyle{example}
\newtheorem{example}{Example}[section]

\theoremstyle{corollary}

\theoremstyle{theorem}
\newtheorem{remark}{Remark}[section]

\numberwithin{equation}{section}

\bibliography{extracted}%

\title{Enhancing polynomial approximation of continuous functions by composition with homeomorphisms}%
% to be submitted
%
% Manuscript guidelines:
% ...
% Authors and affiliations
\author[1,2]{Álvaro Fernández Corral\thanks{Corresponding Author: alvaro.fernandez@robochimps.com}\orcidlink{0009-0009-5727-5578}}

\author[3]{Yahya Saleh\orcidlink{0000-0002-3235-217X}}
%\author[4]{David L. Bishop\orcidlink{0000-0002-5396-9419}}

\affil[1]{\small \cfeldesy}
\affil[2]{\small \uhhphys}
\affil[3]{\small \uhhmaths}
%\affil[4]{\small \yale}
\date{}
\begin{document}
\maketitle

\begin{abstract}\noindent%
We enhance the approximation capabilities of algebraic polynomials by composing
them with homeomorphisms. This composition yields families of functions that
remain dense in the space of continuous functions, while enabling more accurate approximations. For univariate continuous
functions exhibiting a finite number of local extrema, we prove that there exist a
polynomial of finite degree and a homeomorphism whose composition approximates
the target function to arbitrary accuracy. The construction is especially
relevant for multivariate approximation problems, where standard numerical
methods often suffer from the curse of dimensionality. To support our
theoretical results, we investigate both regression tasks and the construction of
molecular potential–energy surfaces, parametrizing the underlying homeomorphism
using invertible neural networks. The numerical 
experiments show strong agreement with our theoretical analysis.

\end{abstract}

\noindent \textbf{Keywords:} 	Approximation theory. Neural networks. Polynomial expansion.   Parametrizable dense sets.

\noindent \textbf{Mathematics Subject Classification:} Primary 41A30.

\section{Introduction}
Let $\Omega \subset \mathbb{R}$ be a compact and connected set and \( h: \Omega
\to \mathbb{R} \) be a homeomorphism onto its image $\Omega_h:=h(\Omega)$. We denote by
$C(\Omega_h)$ the space of real-valued continuous functions on $\Omega_h$ and
endow it with the standard supremum norm. The function $h$ induces
a bounded composition operator 
$$
K_h: C(\Omega_h) \to C(\Omega), \quad K_h: f \mapsto f \circ h,
$$

see, \eg, ~\textcite{Singh:CompOp1993}. Let \( \Phi :=
\text{span}\left(\{\phi_i\}_{i=0}^\infty\right) \) be a dense set in \( C(\Omega_h)
\) with respect to the supremum norm.
In this work, we study
the density of the set 
	\begin{equation}
		\label{eq:induced_set}
		\Phi_h := \text{span}\left(\{ \phi_i \circ h \}_{i=0}^\infty \right)
	\end{equation}
	in \( C(\Omega) \) with respect to the supremum norm. We demonstrate that
	$h$ being a homeomorphism is sufficient for the density of $\Phi_h$
	in $C(\Omega)$, see \Cref{th:dense_set}.
	We restrict our analysis to the set of
	polynomials $\Pi := \text{span} \left(\{x^i\}_{i=0}^\infty \right)$ and study
	the approximation properties of the induced set
	\begin{equation}
		\label{eq:induced_polynomial_set}
		\Pi_h := \text{span}\left(\{ x^i \circ h \}_{i=0}^\infty \right).
	\end{equation}

In particular, for any function $f \in C(\Omega)$ exhibiting $M\in
\mathbb{N}$ local extrema, we demonstrate the
existence of a homeomorphism $h$ inducing a composition operator $K_h$ and a polynomial of degree $M+1$, denoted by $p$, such that the function
\begin{equation}
	\label{eq:induced_approximant}
	K_h (p) = p \circ h  	
\end{equation}
approximates $f$ arbitrarily well, see \Cref{th:Nextreme}. This polynomial degree is minimal for obtaining arbitrarily accurate approximations to $f$, see~\Cref{Th:MinimalDeg}.

This result can be viewed as a generalization of the classical fact that an exact 
representation of a function by a finite-degree polynomial is only possible if the 
function is itself a polynomial. Here, the expressivity of the set of monomials is 
enhanced by composition with
a function $h$, enabling finite-dimensional 
representations for a much broader class of continuous functions.

To support our theoretical findings, we present numerical experiments in which
the homeomorphism $h$ is modelled by an invertible residual
neural network (iResNet)~\cite{Behrmann:ICML2019:573}. Using univariate
fitting problems, the experiments directly validate our theoretical results
and illustrate the role of the inducing function~$h$. The results demonstrate
orders-of-magnitude improvements in accuracy when using approximants of the
form~\eqref{eq:induced_approximant}.

Although our theory is developed for the
univariate case, we also discuss potential extensions to multidimensional
approximation problems and provide numerical examples. Concretely, a
multivariate dense set induced by composing a polynomial with a homeomorphism is used for fitting potential
energy surfaces, which are of importance in molecular physics,  see~\Cref{sec:multiD}. The
multidimensional results likewise show accuracy improvements by several orders
of magnitude.

Our study is motivated by recent computational developments in
nuclear-motion theory and condensed matter physics. In
\textcite{Saleh:JCTC21:5221}, we proposed to compose basis sets of the $L^2$
space with diffeomorphisms, modelled by iResNets, and used the resulting induced basis sets to compute
vibrational spectra of molecules. Results showed orders-of-magnitudes
improvement over the use of standard basis sets. Further studies extended our
understanding of such induced basis sets~\cite{Vogt:JCP163:154106} and their applicability to higher
dimensional molecular systems~\cite{Zhang:JCP161:024103} and to condensed matter 
physics~\cite{Zhang:PR134:246101,Xie:JML1:3859}. 

Despite the practical success of basis sets induced by composition, little
attention was given to their analysis. Some basic functional and measure-theoretic aspects
of these basis sets were
reported~\cite{Saleh:CAOT20:21,Saleh:PAMM23:e202200239} and some results
on the gained approximation power were derived~\cite{Saleh:thesis:2023}.
However, all these studies considered the $L^2$ space. To the best of our
knowledge, this work is the first to consider the density and approximation
properties of sets of the forms \eqref{eq:induced_set} and
\eqref{eq:induced_polynomial_set} in spaces of continuous functions.

More generally, our results fall in a broader class of literature that investigates
generalized orthogonal polynomials. A prominent example of such studies is
the Müntz-Szász theorem~\cite{Muntz:1914}, 
which characterizes when a set of monomials
with fractional exponents can approximate any continuous function on a compact
interval~\cite{Almira:SAT3:152194}. The Müntz-Szász theorem has seen
various applications, such as, \eg, in spectral methods for solving partial
differential equations~\cite{Shen:SJSC38:A2357,Zeng:arxiv2025.15538}. In
contrast to the Müntz--Szász theorem and its
variations, the induced sets we consider, \ie, \eqref{eq:induced_set} and
~\eqref{eq:induced_polynomial_set}, are constructed by
applying the same transformation to all functions in the dense set. 

\section*{Notation and terminology}

Throughout this paper we denote by $\Omega$ a compact domain of the real line.
We denote by $\Omega^\circ$ the interior of $\Omega$. A function \( f: \Omega \to \mathbb{R} \) is said to
have a local minimum at \(
x^* \in \Omega^\circ \) if there exists \( \varepsilon > 0 \) such that \( f(x) \geq
f(x^*) \) for all \( x \) with \( |x - x^*| < \varepsilon \). If the inequality is
strict, \ie,  \( f(x) > f(x^*) \)  for all \( x \) with \( 0<|x - x^*| < \varepsilon \), 
the function is said to have a strict local minimum
at \( x^* \). The point \( x^* \) is called a local minimizer of $f$. Similarly, \( f
\) is said to have a local maximum at \( x^* \in \Omega^\circ \) if there exists \(
\varepsilon > 0 \) such that \( f(x) \leq f(x^*) \) for all \( x \) with \( |x -
x^*| < \varepsilon \). If the inequality is strict, \ie, \( f(x) < f(x^*) \) for all \( x 
\) with \( 0<|x - x^*| < \varepsilon \), the
function is said to have a strict local maximum at \( x^* \). The point \( x^*
\) is called a local maximizer. A local minimum or maximum of $f$ is called a
local extremum. 

Functions can have a continuum of local extrema when they have an interval of 
constancy, \ie, when they are constant over some interval.  To capture these 
functions, we reformulate the definition of local extrema using
sets. 
A non-empty closed set $X^* \subset \Omega$ is said to be a local minimizer set
of $f$ if $f$ is constant on $X^*$, \ie, if $f(x^*) =
c$ for all $x^* \in X^*$, and there exists $\varepsilon>0$ such that $f(x) >
c$ for all
$x \in \Omega \setminus X^*$ with
$\text{dist}(x,X^*)<\varepsilon$. The same
rationale can be followed to define a set of local maximizers.

An example of a function with a local minimizer set is given in equation 
\eqref{eq:f3} and plotted in \Cref{fig:minfty}.

For a function $f \in C(\Omega)$, we define its critical set as the sequence of
sets of local extrema and denote it as $\mathbb{X} := \{ X^*_{i}\}_{i=1}^M$ for some $M\in \mathbb{N}$.
We assume that the elements of the critical set are always sorted such that, if $x^*_i \in X^*_i$ and $x^*_j \in X^*_j$,
then $x^*_i < x^*_j$ for all $i < j$.  By definition, the sequence of local
extrema  $\{f_i = f(x^*_i)\}_{i=1}^M$
satisfies either 
$$
f_1 < f_2 > f_3 < \dots \ \quad \text{or} \quad \
f_1 > f_2 < f_3 > \dots
$$
for any $x_i^* \in X_i^*$.
These two alternating relations can be written compactly as 
$$
\pm (-1)^i (f_{i+1} - f_{i}) > 0, \quad \text{for }i=1, \dots, M-1.
$$

A function $f \in C(\Omega)$ is said to have an interval of constancy $I 
\subseteq \Omega$ if $I$ is a measurable set on which $f$ is constant, \ie, 
$f(x) = c$ for some real number $c$ and all $x \in I$. By definition, sets of non-strict 
local extrema can only occur in intervals of constancy.

A
function $h: \Omega \to \mathbb{R}$ is said to be a homeomorphism onto its image $\Omega_h = h(\Omega)$
if $h$ is continuous,
injective, and its inverse $h^{-1}:\Omega_h \to \Omega$ is continuous. This
is equivalent to $h$ being continuous and strictly monotonic. The critical set of 
a
homeomorphism is empty. 
\section{Preliminaries}
We recall several classical results from approximation theory that help place 
our work in context. 

The celebrated Weierstrass approximation theorem states that the set of polynomials $
\Pi$
is dense in \( C(\Omega) \) with respect to the supremum norm. In other words,
for any \( f \in C(\Omega) \) and \( \varepsilon > 0 \), there exist an integer
\( d \in \mathbb{N} \) and a polynomial  
\[
p_d(x) = \sum_{i=0}^{d} a_i x^i
\]
such that
\[
\sup_{x \in \Omega} |f(x) - p_d(x)| < \varepsilon.
\]
The integer \( d \) is referred to as the degree of the approximating polynomial \( p_d \).

An exact representation of \( f \in C(\Omega) \) by a finite-degree polynomial is possible only if \( f \) itself is a polynomial. In general, polynomial approximations converge to \( f \) asymptotically as \( d \to \infty \), with the convergence rate determined by the smoothness of \( f \). Classical results such as Jackson’s inequality provide quantitative bounds on this approximation error in terms of the function’s regularity~\cite{Lloyd:AproxTheory:2019}. 

The study of other dense sets in \( C(\Omega) \) has a long history in
approximation theory. The most notable example is provided by the Müntz-Szász
theorem~\cite{Almira:SAT3:152194}.  For \( \Omega = [a,b] \) with \( b > a > 0
\) consider 
\[
\Psi_\Lambda = \mathrm{span}\!\left(\{x^{\lambda_i}\}_{i=0}^\infty\right),
\]
where \( \Lambda = \{\lambda_i\}_{i=0}^\infty \subseteq \mathbb{R}_+ \). Then \( \Psi_\Lambda \) is dense in \( C(\Omega) \) if and only if \( \lambda_0 = 0 \) and 
\[
\sum_{i=1}^\infty \frac{1}{\lambda_i} = \infty.
\]
By setting \( \Lambda = \mathbb{N}
\) one recovers the set of polynomials $\Pi$.

\section{Approximation \emph{via} polynomials composed with homeomorphisms}\label{sec:1Dtheory}

In this section, we investigate the improved approximation capabilities achieved
by composing dense sets with homeomorphisms.
We begin by establishing the theoretical guarantees for the density of such
composed sets in the space of continuous functions.

\begin{theorem}\label{th:dense_set}
	Let $\Omega \subset \mathbb{R}$ be a connected and compact set and let \( h:
	\Omega \to \mathbb{R} \) be a homeomorphism onto its image $\Omega_h$.
	Let \( \Phi := \text{\emph{span}}\left(\{\phi_i\}_{i=0}^\infty\right) \) be dense in \( C(\Omega_h) \) with respect to the supremum norm. Then the set
	\[
		\Phi_h := \text{\emph{span}}\left(\{ \phi_i \circ h \}_{i=0}^\infty \right)
	\]
	is dense in \( C(\Omega) \) with respect to the supremum norm.
\end{theorem}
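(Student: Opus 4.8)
The plan is to exploit the fact that the composition operator $K_h$ is not merely bounded but an isometric isomorphism of $C(\Omega_h)$ onto $C(\Omega)$, and then simply transport an approximation across this isomorphism. Concretely, I would first record the elementary topological facts: since $h$ is continuous and $\Omega$ is compact, $\Omega_h = h(\Omega)$ is compact, so $C(\Omega_h)$ with the supremum norm is a well-defined Banach space and the hypothesis on $\Phi$ is meaningful. Because $h$ is a homeomorphism onto $\Omega_h$, its inverse $h^{-1}: \Omega_h \to \Omega$ is continuous, so $K_{h^{-1}}: C(\Omega) \to C(\Omega_h)$, $g \mapsto g \circ h^{-1}$, is also a bounded linear operator.

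Next I would check that $K_h$ and $K_{h^{-1}}$ are mutually inverse and norm-preserving. For $f \in C(\Omega_h)$ one has $K_{h^{-1}}K_h f = (f\circ h)\circ h^{-1} = f$ since $h^{-1}\circ h = \mathrm{id}_\Omega$, and symmetrically $K_h K_{h^{-1}} g = g$; thus $K_h$ is a linear bijection. It is moreover an isometry: since $h$ maps $\Omega$ \emph{onto} $\Omega_h$, we get $\sup_{x\in\Omega}|f(h(x))| = \sup_{y\in\Omega_h}|f(y)|$, that is, $\|K_h f\|_{C(\Omega)} = \|f\|_{C(\Omega_h)}$. Hence $K_h$ is a surjective linear isometry.

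Finally I would carry out the transport. Fix $g \in C(\Omega)$ and $\varepsilon > 0$. Then $g\circ h^{-1} = K_{h^{-1}}g \in C(\Omega_h)$, and by density of $\Phi$ there are a finite index $N$ and coefficients $c_0,\dots,c_N$ such that $\psi := \sum_{i=0}^N c_i\phi_i$ satisfies $\|g\circ h^{-1} - \psi\|_{C(\Omega_h)} < \varepsilon$. Applying the isometry $K_h$ yields
\[
\|g - \psi\circ h\|_{C(\Omega)} \;=\; \|K_h(g\circ h^{-1} - \psi)\|_{C(\Omega)} \;=\; \|g\circ h^{-1} - \psi\|_{C(\Omega_h)} \;<\; \varepsilon,
\]
while by linearity of $K_h$ we have $\psi\circ h = \sum_{i=0}^N c_i(\phi_i\circ h) \in \Phi_h$. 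Since $g$ and $\varepsilon$ were arbitrary, $\Phi_h$ is dense in $C(\Omega)$.

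There is no deep obstacle here: the whole content is the observation that precomposition with a homeomorphism is a surjective isometry between the relevant spaces of continuous functions, so density is preserved automatically under $K_h$ (and under $K_{h^{-1}}$). The only points needing a line of care are confirming that $\Omega_h$ is compact, so that the statement about $\Phi$ is well posed, and that $h^{-1}$ is genuinely continuous on $\Omega_h$, which is precisely the homeomorphism hypothesis; notably, connectedness of $\Omega$ plays no role in this particular theorem and could be dropped.
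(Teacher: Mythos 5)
Your proof is correct and follows essentially the same route as the paper's: approximate $f\circ h^{-1}$ in $C(\Omega_h)$ by an element of $\Phi$ and pull the approximation back by precomposition with $h$, the only difference being that you phrase the pull-back as an isometric isomorphism $K_h$ while the paper verifies the same norm identity pointwise. Your side remark that connectedness of $\Omega$ is not needed is also accurate.
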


\begin{proof}
Let \( f \in C(\Omega) \) be arbitrary. Since $h$ is a homeomorphism,  it admits a continuous inverse \( h^{-1}: \Omega_h \to \Omega \)~\cite{Apostol:MathAnalyis:1974}.

The composition of continuous functions is continuous~\cite{Rudin:PrinciplesMathAnalysis:1976, Apostol:MathAnalyis:1974}, so \( f \circ h^{-1} \in C(\Omega_h) \). By the density of \( \Phi \) in  \( C(\Omega_h) \), for any \( \varepsilon > 0 \), there exists a function \( g \in \Phi \) such that
\[
\sup_{y \in \Omega_h} \left| f \circ h^{-1}(y) - g(y) \right| < \varepsilon.
\]
Define the function \( \tilde{g} := g \circ h \). Since \( g \in \Phi = \text{span}\left(\{\phi_i\}_{i=0}^\infty \right)\), it follows that \( \tilde{g} \in \Phi_h = \text{span}\left(\{\phi_i \circ h\}_{i=0}^\infty \right) \). For all \( x \in \Omega \) and $y = h(x) \in \Omega_h$, we then have
\[
\abs{f(x) - \tilde{g}(x)} = \abs{f(x) - g(h(x))} = \abs{f \circ h^{-1}(y) - g(y)} < \varepsilon,
\]
since \( \Omega_h = h(\Omega) \). Therefore,
\[
\sup_{x \in \Omega} |f(x) - \tilde{g}(x)| < \varepsilon.
\]
This shows that any \( f \in C(\Omega) \) can be approximated arbitrarily well with respect to the supremum norm by functions in \( \Phi_h \), and thus \( \Phi_h \) is dense in \( C(\Omega) \) with respect to the supremum norm.
\end{proof}
Note that \Cref{th:dense_set} remains valid if the domain $\Omega$ is a
compact and connected subset of $\mathbb{R}^d$.

\begin{remark}
	\Cref{th:dense_set} broadens the use of dense sets in approximation theory in two complementary ways. 
First, it allows a dense family of functions defined on one compact domain to be
transferred to any other compact domain \emph{via} a homeomorphism \(h\); this includes the standard case where \(h\) is linear. 
Second, it highlights a distinct mechanism for improving approximation accuracy.
In standard approximation theory, one seeks an approximation of an unknown
function \(f\) in the span of finitely many functions from a dense set. The
accuracy of the approximation can typically be improved by increasing the number
of functions used in the approximation.
In contrast, \Cref{th:dense_set} suggests that one can also improve
the approximation by optimizing the choice of the dense set itself through a suitable function
\(h\). 
This observation motivates considering the family
\[
	H = \left\{ \Phi_h \;\middle|\; h:\Omega \to \mathbb{R}, \; h \text{ is a homeomorphism onto its image $\Omega_h$} \right\},
\]
and searching for the dense set within this family that is most adequate for a given approximation task.
\end{remark}

\begin{remark}
	\Cref{th:dense_set} requires the image of $h$
	to be the domain of the dense set $\Phi$. For a given dense set $\Phi$, this restriction imposes a constraint on the
	possibilities for the transformation $h$. To overcome this limitation,
	sets that are dense on any compact subset of the real line can be used. In
	particular, the set of polynomials~$\Pi$ satisfies this
	condition. This motivates the preferential use of the induced set 
	$$\Pi_h =  \text{span}\left(\left\{x^i \circ h\right\}_{i=0}^\infty\right) =
	\text{span}\left(\left\{ h^i\right\}_{i=0}^\infty\right),$$
	which is dense for every compact domain and every image of $h$. 
\end{remark}

We note that, in the special case in which $\Phi$ satisfies the Müntz-Szász theorem, \Cref{th:dense_set} resembles
a density result reported by~\textcite{Jaming:BdScMat:102933}. Specifically, the
authors showed that the set $\Phi_h = \mathrm{span}\left(\{h^{\lambda_i}\}_{i=0}^\infty\right)$--- where the set of powers $\Lambda = \{\lambda_i\}_{i=0}^\infty$ satisfies the Müntz-Szász conditions--- is
dense in the space of continuous functions only if $h$ is monotonic and its first and second-order derivative do not vanish simultaneously ~\cite[Proposition
3.1]{Jaming:BdScMat:102933}. In contrast, our result does not require $h$ to be differentiable.

In what follows, we restrict our analysis to the study of the approximation properties of the induced polynomial set $\Pi_h$. 
For any continuous function $f$ with finitely-many sets of local extrema, we
demonstrate the existence of a homeomorphism $h$ and a finite-degree polynomial
$p$, such that $p\circ h$ approximates $f$
arbitrarily well. To prove this, we first introduce two
instrumental lemmas. 

\begin{lemma}\label{th:interval}
	For some bounded intervals $\Omega_f,\Omega_g \subset \mathbb{R}$ let $f : \Omega_f
	\to \mathbb{R}$ be a monotonic, continuous function and let $g : \Omega_g
	\to \mathbb{R}$ be a strictly monotonic, continuous function. Suppose
	that $f(\Omega_f) =
	g(\Omega_g)=:I$. Then, for every $\varepsilon > 0$, there exists a
	homeomorphism $h: \Omega_f \to \Omega_g$ such that
	\[
		\sup_{y \in \Omega_f}\abs{f(y) - g \circ h(y)} < \varepsilon.  
	\]
	
If in fact $f$ is strictly monotonic, then
\[
f(y) = g \circ h(y)
\]
for all $y \in \Omega_f$.
\end{lemma}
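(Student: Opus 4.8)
The plan is to reduce the statement to the strictly monotonic case, where the desired homeomorphism is simply $h=g^{-1}\circ f$, and to handle a merely monotonic $f$ by first approximating it uniformly by a strictly monotonic continuous function with the \emph{same} range $I$. The starting observation is that a continuous, strictly monotonic function on an interval is a homeomorphism onto its image; applied to $g$, this produces a continuous, strictly monotonic inverse $g^{-1}\colon I\to\Omega_g$. One should also dispose of the degenerate case $I=\{c\}$ first: since $g$ is strictly monotonic, this forces $\Omega_g$, and hence $\Omega_f$ and $f(\Omega_f)$, to be a single point, so the claim is trivial; thus assume $I=[m,M]$ with $m<M$ (and take $\Omega_f,\Omega_g$ closed, as in the intended applications).

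For the strictly monotonic case, if $f$ is strictly monotonic then $f\colon\Omega_f\to I$ is itself a homeomorphism, so $h:=g^{-1}\circ f\colon\Omega_f\to\Omega_g$ is a composition of homeomorphisms, hence a homeomorphism, and $g\circ h=g\circ g^{-1}\circ f=f$ pointwise. This immediately gives the final assertion of the lemma, and the first one with room to spare.

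For the general case I would construct the approximant explicitly. Assume $f$ is nondecreasing (the nonincreasing case being symmetric), write $a<b$ for the endpoints of $\Omega_f$, so that $f(a)=m$ and $f(b)=M$, and let $L\colon\Omega_f\to I$ be the affine bijection with $L(a)=m$ and $L(b)=M$. For $\delta\in(0,1)$ set $\tilde f:=(1-\delta)f+\delta L$. A convex combination of a nondecreasing continuous function and a strictly increasing continuous function is strictly increasing and continuous; since $\tilde f(a)=m$ and $\tilde f(b)=M$, the intermediate value theorem yields $\tilde f(\Omega_f)=I$; and $\|\tilde f-f\|_\infty=\delta\|L-f\|_\infty\le\delta(M-m)$ because $L$ and $f$ both take values in $I$. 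Choosing $\delta<\varepsilon/(M-m)$ and applying the strictly monotonic case to $\tilde f$ — legitimate since $\tilde f(\Omega_f)=I=g(\Omega_g)$ — produces a homeomorphism $h:=g^{-1}\circ\tilde f\colon\Omega_f\to\Omega_g$ with $g\circ h=\tilde f$, whence $\sup_{y\in\Omega_f}|f(y)-g\circ h(y)|=\|f-\tilde f\|_\infty<\varepsilon$.

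I expect the only delicate point — and the reason for this particular $\tilde f$ — to be the bookkeeping of ranges: for $h:=g^{-1}\circ\tilde f$ to be a homeomorphism \emph{onto} $\Omega_g$ (rather than merely a continuous injection into it), the approximant $\tilde f$ must have image \emph{exactly} $I$, not just approximately. The convex combination with the affine map $L$ is tailored precisely to preserve the endpoint values of $f$, and therefore its range, while eliminating every interval of constancy; everything else (continuity of the compositions, injectivity of $h$, continuity of $h^{-1}=\tilde f^{-1}\circ g$, and the $\varepsilon$-estimate) is routine.
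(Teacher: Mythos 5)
Your proof takes essentially the same route as the paper's: approximate the monotonic $f$ by a strictly monotonic continuous $\hat f$ and set $h=g^{-1}\circ\hat f$. The difference is that the paper simply invokes ``the density of strictly monotonic functions in the space of monotonic functions'' and does not address the point you correctly identify as the crux --- that the approximant must have range \emph{exactly} $I$ for $g^{-1}\circ\hat f$ to be a well-defined homeomorphism onto $\Omega_g$; your convex combination $(1-\delta)f+\delta L$ with the endpoint-matching affine map supplies this, so your write-up is actually the more complete of the two. One small slip: in the degenerate case $I=\{c\}$, a single-point $\Omega_g$ does \emph{not} force $\Omega_f$ to be a single point (a constant $f$ on a nondegenerate interval also gives $I=\{c\}$), and in that situation no homeomorphism $\Omega_f\to\Omega_g$ exists at all; this case is excluded in the lemma's actual use (consecutive extrema of the target have distinct values), but your dismissal of it as forced-trivial is not quite right.
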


\begin{proof}
	Using the density of strictly monotonic functions in the space of monotonic functions, there exists a continuous strictly monotonic function $\hat{f}$ such that, for every $\varepsilon > 0$,
	\[
		\sup_{y \in \Omega_f}\abs{f(y) - \hat{f}(y)} < \varepsilon.  
	\]
	
	Since $g$ is a strictly monotonic and continuous function, its inverse
	function $g^{-1}$ exists and is unique, continuous and strictly monotonic~\cite{Apostol:MathAnalyis:1974}. The function $h = g^{-1} \circ
	\hat{f}$ satisfies the conditions of the lemma.	
	
	In particular, if $f$ is strictly monotonic, then $f = \hat{f}$ and the proposed homeomorphism $h$ satisfies $f(y) = g\circ h(y)$ for all $y \in \Omega_f$.
\end{proof}

\begin{lemma}
	\label{Thm:Chandler}
	For some $M\in \mathbb{N}$ let $\{f_i\}_{i=0}^{M+1}$ be a set of real numbers satisfying
	$$
\pm (-1)^i (f_{i+1} - f_{i}) > 0, \quad \text{for }i=0, \dots, M,
$$
 for either positive or negative sign.
	Then, there exists a polynomial $p$ of degree $M+1$, and a set of distinct
	real numbers $y_0 <y_1< \dots <
	y_{M+1}$ such that
	\begin{align*}
		\begin{cases}
			p \ (y_i) = f_i \quad &i = 0, \dots M+1, \\
			p'(y_i)=0 \quad &i = 1, \dots, M.
		\end{cases}
	\end{align*}
In particular, $\{y_i\}_{i=1}^{M}$ is the critical set of $p$.

\end{lemma}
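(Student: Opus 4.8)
The plan is to construct $p$ through its derivative. Look for $p$ of degree $M+1$ with $p'(x)=c\prod_{i=1}^{M}(x-y_i)$ for some $c\neq 0$ and some reals $y_1<\cdots<y_M$; fixing the constant of integration by $p(y_1)=f_1$ makes $p$ strictly monotone on each interval $(y_i,y_{i+1})$ and on the two unbounded complementary rays, so the $y_i$ are simple zeros of $p'$ and form exactly the critical set of $p$. Thus it suffices to choose $c$ and $y_1,\dots,y_M$ so that the $M$ critical values and the two endpoint values land on the prescribed numbers. Replacing $f_i$ by $-f_i$ (which replaces $p$ by $-p$) we may assume the sign in the hypothesis is $+$, i.e. $f_0<f_1>f_2<\cdots$; the cases $M=0$ (take $p$ affine) and $M=1$ (take $p$ a downward parabola) are immediate, so from now on $M\ge 2$.

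First I dispose of the two endpoint values. A sign count on the factors of $p'$ shows that on $(-\infty,y_1)$ the derivative has the sign of $c(-1)^{M}$; choosing $\operatorname{sgn}(c)=(-1)^{M}$ makes $p$ strictly increasing there with $p\to-\infty$, so $p$ attains every value below $f_1=p(y_1)$ on $(-\infty,y_1)$, and since the hypothesis at $i=0$ gives $f_0<f_1$ there is a unique $y_0<y_1$ with $p(y_0)=f_0$. The symmetric count on $(y_M,\infty)$, together with the hypothesis at $i=M$, yields a unique $y_{M+1}>y_M$ with $p(y_{M+1})=f_{M+1}$, and this holds for \emph{every} admissible $c$ and \emph{every} admissible node configuration. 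Hence the only conditions left are $p(y_i)=f_i$ for $i=2,\dots,M$. Using the remaining affine freedom in $x$ to normalize $y_1=0$ and $y_M=1$, we are left with the $M-1$ unknowns $c$ (of the fixed sign) and $0<y_2<\cdots<y_{M-1}<1$, and with the $M-1$ equations $p(y_i)=f_i$, $i=2,\dots,M$.

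The same sign counts show that, for any admissible $(c,y_2,\dots,y_{M-1})$, the critical values $v_i:=p(y_i)$ automatically satisfy $v_2<f_1$ and the strict alternation $v_2<v_3>v_4<\cdots$, i.e. they lie in an open region $R\subset\mathbb{R}^{M-1}$ that contains the prescribed target $(f_2,\dots,f_M)$. So the lemma reduces to proving that the continuous map $\Psi\colon(c,y_2,\dots,y_{M-1})\mapsto(v_2,\dots,v_M)$ from its $(M-1)$-dimensional domain onto $R$ is surjective; I expect this to be the crux. The route I would take is a topological degree (or Poincar\'e--Miranda) argument: analyze the limiting behaviour of $\Psi$ as the configuration degenerates --- a collision of two adjacent nodes (including $y_2\to 0$ or $y_{M-1}\to 1$) forces the two corresponding values among $f_1,v_2,\dots,v_M$ to coalesce, $|c|\to 0$ collapses every difference $v_{i+1}-v_i$ to $0$, and $|c|\to\infty$ sends each such difference to $\pm\infty$ with a fixed sign (since $\Psi$ is affine in $c$ for fixed nodes) --- and conclude that $\Psi$ is proper onto $R$ and carries the boundary of its domain to $\partial R$ with degree one, hence is onto; an alternative is an induction on $M$ with the explicit $M=2$ computation as the base case. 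Solving the system and then undoing the affine normalization in $x$ produces a polynomial of degree exactly $M+1$ with $p(y_i)=f_i$ for $i=0,\dots,M+1$, $p'(y_i)=0$ for $i=1,\dots,M$, and critical set $\{y_1,\dots,y_M\}$, as required. \hfill$\square$
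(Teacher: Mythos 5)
Your architecture is sound, and your treatment of the two exterior points is essentially identical to the paper's: once a degree-$(M+1)$ polynomial with simple interior critical points $y_1<\dots<y_M$ and critical values $f_1,\dots,f_M$ exists, strict monotonicity of $p$ on the two unbounded rays together with the intermediate value theorem and the sign of $f_1-f_0$ (resp. $f_{M+1}-f_M$) yields $y_0$ and $y_{M+1}$. Your sign bookkeeping for $c$, the normalization $y_1=0$, $y_M=1$, the observation that the critical values of any admissible configuration automatically alternate and land in the open region $R$, and the resulting reduction to surjectivity of $\Psi\colon(c,y_2,\dots,y_{M-1})\mapsto(v_2,\dots,v_M)$ are all correct.

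The gap is that this surjectivity \emph{is} the lemma, and you leave it as a plan ("the route I would take", "I expect this to be the crux") rather than a proof. For comparison, the paper does not prove this step from scratch either: it cites a short note of Chandler for exactly the interior existence statement and only supplies the endpoint argument itself. According to the paper's summary, Chandler's proof studies the map from interior node configurations to the alternating differences $(-1)^{i-1}\bigl(\hat p(y_{i+1})-\hat p(y_i)\bigr)$ and shows its Jacobian determinant is nonzero under the alternation hypothesis — so your proposed route is in the same spirit as the cited argument, and your inventory of degenerations (adjacent node collisions, $|c|\to 0$, $|c|\to\infty$) is the right list for establishing properness of $\Psi$ onto $R$. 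But the assertion that $\Psi$ "carries the boundary of its domain to $\partial R$ with degree one" is exactly what needs proof: you would still have to either verify that the Jacobian of $\Psi$ never vanishes (so that $\Psi$ is a proper local homeomorphism onto the open convex, hence simply connected, region $R$, and therefore onto), or compute the degree at one convenient target, or carry out the induction on $M$ you mention. As written, the crux is announced but not closed; citing Chandler's result, as the paper does, or completing one of these arguments would finish the proof.
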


\begin{proof}
	\textcite{Chandler:TAMM64:679680} proved that a polynomial $p$ and interior
	points $\{y_i\}_{i=1}^M$ that satisfy the conditions of the theorem exist.
	The proof involves constructing a function $\phi$ that maps the interior
	points $\{y_i\}_{i=1}^M$ to the set 
	$$\{(-1)^{i-1} \left(\hat p(y_{i+1}) - \hat p(y_i)\right)\}_{i=1}^M,$$
	 where $\hat p$ is the polynomial that has $\{y_i\}_{i=1}^M$ as its critical point. The determinant of $\phi$ is non-zero if the recurrent relation of the values $f_i$ is satisfied, thereby proving the existence of $p$. 
	 
	The existence of exterior points $y_0$ and $y_{M+1}$ can be proven following a logic that depends on the alternating relation of $f_i$. If $f_0 < f_1$, $p$
	is increasing on $(-\infty, y_1)$, so by continuity there exists a $y_0 < y_1$ such that 
	$p(y_0) = f_0$. Alternatively, if $f_0 > f_1$, then $p$
	is decreasing on $(-\infty, y_1)$, and there exists a $y_0 < y_1$ such that 
	$p(y_0) = f_0$. Similarly, we can deduce the existence of a $y_{M+1} > y_M$ such that 
	$p(y_{M+1}) = f_{M+1}$.
\end{proof}
With these technical results in hand, we can now proceed to state and demonstrate
the main theorem of our work. We omit the trivial case in which $f$ is constant,
since any constant function can already be represented exactly by a degree-zero
polynomial. 
\begin{theorem}\label{th:Nextreme}For any compact $\Omega\subset 
\mathbb{R}$ let \( f : \Omega \to \mathbb{R} \) be a continuous 
non-constant function. Suppose that \( f \) has 
	exactly \( M \in \mathbb{N} \) sets of local extrema $\{X_i\}_{i=1}^M$ on 
	\( \Omega \). Then, for any \( \varepsilon > 0 \), there exists
	\begin{itemize}
		\item a compact interval \( \Omega_h \subset \mathbb{R} \),
		\item a homeomorphism \( h : \Omega \to \Omega_h \),
		\item and a polynomial $p$ of degree $M+1$ with $M$ distinct strict local extrema,
	\end{itemize}
	such that 
	\begin{align}\label{eq:theorem_dense}
	\sup_{x \in \Omega} \ \abs{f(x) - p \circ h(x)} < \varepsilon
	\end{align}
	
	holds. 
	
	If, in addition, $f$ has no interval of constancy, 
	then there exist $\Omega_h$, $h$, and $p$ as above such that
	\begin{align}\label{eq:theorem_equals}
		f(x) = p \circ h(x)
	\end{align}
	for all $x \in \Omega$.
\end{theorem}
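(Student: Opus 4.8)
The plan is to prove the exact statement \eqref{eq:theorem_equals} first, for functions without intervals of constancy, and then to recover the approximate statement \eqref{eq:theorem_dense} for a general $f$ by a perturbation argument together with the triangle inequality. Observe first that, since a homeomorphism carries $\Omega$ onto the compact \emph{interval} $\Omega_h$, the set $\Omega$ is itself a compact interval; write $\Omega=[a,b]$ with $a<b$ (if $a=b$ then $f$ is constant, a case we have excluded). Note also that, because $f$ has only finitely many sets of local extrema, each $X_i$ must be a genuine closed interval on which $f$ is constant: a disconnected $X_i$ would force $f$ to have an additional local extremum in one of its gaps. Hence if $f$ has no interval of constancy, each $X_i$ is a single point $x_i$, and $a<x_1<\dots<x_M<b$.

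\emph{The case $f$ has no interval of constancy.} On each of the intervals $[a,x_1],[x_1,x_2],\dots,[x_M,b]$ the function $f$ has no interior local extremum and no flat part, hence is strictly monotone, with alternating orientations; in particular $f(a)\neq f(x_1)=:f_1$ and $f(b)\neq f(x_M)=:f_M$, for otherwise $f$ would be constant on $[a,x_1]$ or on $[x_M,b]$. A brief inspection of which monotonicity direction is compatible with $x_1$ (resp.\ $x_M$) being a minimum or a maximum shows that $f_0:=f(a)$ and $f_{M+1}:=f(b)$ lie on the side of $f_1$ (resp.\ $f_M$) needed to continue the alternation, so that $f_0,f_1,\dots,f_M,f_{M+1}$ is a strictly alternating sequence in the sense of \Cref{Thm:Chandler}. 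Applying \Cref{Thm:Chandler} to this sequence produces a polynomial $p$ of degree $M+1$ and points $y_0<y_1<\dots<y_{M+1}$ with $p(y_i)=f_i$ and with $\{y_i\}_{i=1}^M$ the critical set of $p$; since $\deg p'=M$, these $M$ critical points are simple, hence distinct strict local extrema. Put $\Omega_h:=[y_0,y_{M+1}]$.

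\emph{Gluing.} For each $i=1,\dots,M+1$ the polynomial $p$ is strictly monotone on $[y_{i-1},y_i]$ with image $[\min(f_{i-1},f_i),\max(f_{i-1},f_i)]$, which is exactly the image of the strictly monotone function $f|_{[x_{i-1},x_i]}$ (setting $x_0:=a$, $x_{M+1}:=b$), and with the same orientation. By the strict case of \Cref{th:interval} there is a homeomorphism $h_i:[x_{i-1},x_i]\to[y_{i-1},y_i]$ with $f=p\circ h_i$ on $[x_{i-1},x_i]$; each $h_i$ is increasing, being a composition $g^{-1}\circ f$ of two monotone maps of matching type, so $h_i(x_i)=y_i=h_{i+1}(x_i)$ and the maps $h_i$ patch together into a continuous, strictly increasing bijection $h:[a,b]\to[y_0,y_{M+1}]=\Omega_h$, i.e.\ a homeomorphism, with $f=p\circ h$ on all of $\Omega$. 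This establishes \eqref{eq:theorem_equals}.

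\emph{The general case.} Given arbitrary $f$ and $\varepsilon>0$, approximate $f$ uniformly within $\varepsilon$ by a continuous function $\tilde f$ that has no interval of constancy and has \emph{exactly the same number} $M$ of sets of local extrema: on each of the (finitely many) maximal intervals of constancy of $f$, replace $f$ by a sufficiently small strictly monotone ramp, tilted so as to match $f$ on both sides without creating a new extremum and so that the extremum count is preserved, treating an interval of constancy that reaches an endpoint of $\Omega$ so that the resulting extremum lies in the interior. Applying the construction above to $\tilde f$ yields $\Omega_h$, a homeomorphism $h$, and a polynomial $p$ of degree $M+1$ with $M$ distinct strict local extrema such that $\tilde f=p\circ h$; then $\sup_{x\in\Omega}\abs{f(x)-p\circ h(x)}\le\sup_{x\in\Omega}\abs{f(x)-\tilde f(x)}<\varepsilon$, which is \eqref{eq:theorem_dense}. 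The two points I expect to require the most care are the behaviour at $\partial\Omega$ — verifying that the endpoint values extend the alternating pattern of \Cref{Thm:Chandler}, and isolating the subcase of a plateau touching an endpoint — and carrying out the perturbation in the last step without changing the number of local extrema; the interior of the argument is routine once \Cref{th:interval} and \Cref{Thm:Chandler} are available.
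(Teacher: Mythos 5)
Your proof is correct and follows essentially the same route as the paper's: the same two lemmas (\Cref{Thm:Chandler} for the polynomial, \Cref{th:interval} for the piecewise homeomorphisms) and the same gluing over the monotone pieces between consecutive extrema. The only structural difference is where the $\varepsilon$ enters: the paper perturbs each monotone restriction $f|_{I_i}$ to a strictly monotone one \emph{inside} \Cref{th:interval}, whereas you perturb $f$ globally to a $\tilde f$ with no intervals of constancy and then invoke the exact case --- the same density-of-strictly-monotonic-functions fact, applied in a different place. One wording slip in your general case: on a plateau that is a set of local extrema the two endpoint values are equal, so no \emph{strictly monotone} ramp can match $f$ on both sides; there you must insert a small V- or $\Lambda$-shaped perturbation with a single strict extremum inside the plateau, and on a non-extremal plateau the perturbation must be spread over the whole surrounding monotone stretch rather than confined to the plateau itself. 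With that (routine) correction the argument goes through.
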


\begin{proof}

Let $\Omega := [x_0, x_{M+1}]$ with $x_0 < x_{M+1}$.	Choose one 
representative
$x_i$	of each set of local extremum  $X_i$ arbitrarily. Let the sets in
	$\mathbb{X}$ be sorted such that \( x_0 < x_1 < x_2 < \dots < x_{M+1} \).
	Set $I_0 := [x_0, x_1]$ and \( I_i := (x_i, x_{i+1}] \) for all $1 \leq i \leq M$. Note that the restrictions \( f|_{I_i} \) are either
	monotonic or strictly monotonic. 
	
	Denote the functional values at the local extrema and endpoints  by $f_i = f(x_i)$ for $i=0, \dots,
	M+1$. Since these values contain the evaluations at extreme points and endpoints, they follow
	the recurrent relation $ \pm (-1)^i (f_{i+1} - f_i) > 0$ for all $i = 0,
	\dots, M$, either for the positive or the negative sign.
	
	By \Cref{Thm:Chandler}, there exist real numbers \( y_0 < y_1 < \dots <
	y_{M+1} \) and a polynomial
	$p$ of degree $M+1$ such that  
	\begin{align}\label{eq:th_interp}
		\begin{cases}
		p \ (y_i) = f_i \quad &i = 0, \dots M+1, \\
		p'(y_i)=0 \quad &i = 1, \dots, M.
		\end{cases}
	\end{align}

	Set $\Omega_h := [y_0,
	y_{M+1}]$, $J_0 := [y_0, y_1]$ and \( J_i := (y_i, y_{i+1}] \) for all $1 \leq i \leq M$.
	As $p$ is of degree $M+1$ and satisfies~\eqref{eq:th_interp}, the restrictions \( p|_{J_i} \) are strictly
	monotonic.		
	In addition, this construction imposes that the images of the restrictions
	are shared $f(I_i) = p(J_i)$ for all $i =0, \dots, M$. By
	applying~\Cref{th:interval} to each restriction, for every positive constant
	$\varepsilon>0$, there exist homeomorphisms $h_i : I_i \to J_i$ such that
	\[
	\sup_{x \in I_i} \ \abs{f(x) - p \circ h_i(x)} < \varepsilon, \quad \text{for all } i = 0, \dots, M.
	\]
	
	Define the global function \( h: \Omega \to \Omega_h \) piecewise by
	\[
	h(x) := h_i(x), \quad \text{for } x \in I_i, \quad i = 0, \dots, M.
	\]
	Since each \( h_i \) is continuous and invertible, and the intervals \( 
	I_i \) only meet at endpoints, the concatenation defines a 
	homeomorphism \( h : \Omega \to \Omega_h\).
	
	It follows that, for every positive constant $\varepsilon>0$, there exists a
	continuous invertible function $h : \Omega \to \Omega_h$ that satisfies
	\[
	\sup_{x \in \Omega} \ \abs{f(x) - p \circ h(x)} = \max_{0\leq i \leq M}  \left( \sup_{x \in I_i} \abs{f(x) - p \circ h_i(x)}\right) < \varepsilon,
	\]
	completing the proof of \eqref{eq:theorem_dense}. 
	
	Last, if $f$ has no interval of constancy, its critical set $\mathbb{X}$ is constituted by strict local extrema and is a sequence of points. Additionally, every restriction $f|_{I_i}$ is strictly monotonic. It follows
	from the proof of~\Cref{th:interval} that $f(x) = p \circ h_i(x)$ for all
	$x\in I_i$  and every $i = 0, \dots, M$. Therefore, the approximation can be made exact,
	proving \eqref{eq:theorem_equals}.
\end{proof}

\Cref{th:Nextreme} establishes the existence of a homeomorphism $h$ and a
corresponding induced set that allows a finite dimensional arbitrary-precision
approximation of a broad class of functions. The theorem demonstrates that one
can increase the accuracy of an approximation by finding a suitable induced set $\Pi_h$,
rather than by increasing the degree of the polynomial within a fixed induced set.

To gain intuition on the approximation within the induced set $\Pi_h$, we consider the case of functions with a single local
extremum. In such case, the function $h$ can be constructed analytically. 
\begin{example}
	\label{example:closed-form}
	Let $f \in C(\Omega)$ be a continuous function with a single set of local extremum $X_0$ and let $x_0 \in X_0$.  
	By~\Cref{th:Nextreme}, there exists a strictly monotonic function $h$ and polynomial $p$ of degree $2$ such that  
	\[
		\sup_{x \in \Omega} | f(x) - p \circ h(x) | < \varepsilon \qquad \text{ for all }  \varepsilon>0.
	\]
	
	We now derive the real coefficients $\{a_i\}_{i=0}^2$ and transformation $h$ such that 
	\[
	p \circ h = \sum_{i=0}^2 a_i h^i
	\]
	approximates  $f$ arbitrarily well.

	The zero-order term is unaffected by the composition. Set $a_0 = f(x_0)$ and define the remainder of the approximation $\hat{f} = f - a_0$.  
	The image of $\hat{f}$ is either nonnegative or nonpositive on $\Omega$.  As
	it will be shown, the first-order term is unnecessary to capture the shape
	of a function with a single extremum, and therefore we set $a_1 = 0$.  
	The approximation problem reduces to approximating $\hat{f}$ by $a_2 h^2$.
	The second-order coefficient is chosen as $a_2 \in \{+1,-1\}$ so that $\hat{f}/a_2 \geq 0$.  
	
	This leads to the representation  
	\begin{align}\label{eq:T1_func}
		\hat{h}(x) = \operatorname{sign}(x-x_0)\,\sqrt{|\hat{f}(x)|},
	\end{align}  
	which is continuous since the sign discontinuity occurs exactly at $x_0$, where $\hat{f}(x_0)=0$.  
	
	In general, $\hat{h}$ needs not be strictly monotonic (as no restriction was imposed to $\hat{f}$ for this to be true).  
	Since strictly monotonic functions are dense in the space of monotonic 
	functions, for any $\varepsilon>0$ there exists a strictly monotonic function 
	$h$ satisfying  
	\[
	\sup_{x \in \Omega} |h(x) - \hat{h}(x)| < \varepsilon.
	\]  
	
	Therefore, the composition of $h$ with the second-degree polynomial with coefficients $(a_0,0,a_2)$, provides an approximation of $f$ to arbitrary precision.  
\end{example}

Next, we demonstrate that the degree of the polynomial $p$ in~\Cref{th:Nextreme} is the minimal degree for obtaining an arbitrarily accurate approximation.
\begin{theorem}\label{Th:MinimalDeg}
	For any compact $\Omega \subset \mathbb{R}$ let $f:\Omega \to 
	\mathbb{R}$ be a continuous non-constant function with $M\in \mathbb{N}$ sets of local extrema. If for all 
	$\varepsilon>0$, there exist 
	some homeomorphism $h: \Omega \to \Omega_h$ and some polynomial $p: \Omega_h \to \mathbb{R}$  of 
	degree $d$ satisfying 
	$$
	\sup_{x \in \Omega} \ |f(x) - p \circ h (x)|<\varepsilon,
	$$ 
	then $d \geq M+1$. In other words, the degree of the polynomial $p$ in 
	\Cref{th:Nextreme} is minimal.
\end{theorem}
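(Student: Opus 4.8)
The plan is to argue by contradiction: suppose that $p \circ h$ approximates $f$ to within $\varepsilon$ for every $\varepsilon > 0$, with $p$ of degree $d \le M$ and $h$ a homeomorphism. The key observation is that composition with a homeomorphism cannot create or destroy local extrema in an essential way: since $h$ is strictly monotonic and continuous, $p \circ h$ has exactly as many sets of local extrema as $p$ does (the monotone change of variables $y = h(x)$ is a bijection between the critical structures, preserving order if $h$ is increasing and reversing it if $h$ is decreasing). A polynomial of degree $d$ has at most $d - 1$ local extrema, since its derivative is a nonzero polynomial of degree $d-1$ with at most $d-1$ real roots; hence $p \circ h$ has at most $d - 1 \le M - 1$ sets of local extrema. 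So it suffices to show that a function which is uniformly close to $f$ must have at least $M$ sets of local extrema — this gives $M \le d - 1$, i.e. $d \ge M + 1$, the desired contradiction.

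First I would set up the combinatorial/topological core: pick representatives $x_i^*$ from each of the $M$ sets of local extrema $X_i$, together with the endpoints $x_0, x_{M+1}$ of $\Omega$ (writing $\Omega = [x_0, x_{M+1}]$), and let $f_i = f(x_i^*)$. As recorded in the paper's Notation section, the values $f_0, f_1, \dots, f_{M+1}$ strictly alternate: $\pm(-1)^i (f_{i+1} - f_i) > 0$ for $i = 0, \dots, M$. Set $\delta := \tfrac{1}{3}\min_{0 \le i \le M} |f_{i+1} - f_i| > 0$ and choose $\varepsilon < \delta$, so that any function $g$ with $\sup_\Omega |f - g| < \varepsilon$ satisfies $g(x_i^*)$ lying within $\delta$ of $f_i$; consequently the sequence $g(x_0^*), \dots, g(x_{M+1}^*)$ still strictly alternates up/down. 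A finite sequence of points on $\Omega$ whose values alternate in this way forces $g$ to have, on each of the $M$ intervals $(x_{i-1}^*, x_{i+1}^*)$ for $i = 1, \dots, M$, at least one local extremum of the appropriate type (a local max where the sequence peaks, a local min where it dips), by the extreme value theorem applied to $g$ restricted to $[x_{i-1}^*, x_{i+1}^*]$: the maximum (resp. minimum) over that closed subinterval is attained at an interior point, since the value there strictly exceeds (resp. is strictly below) both endpoint values $g(x_{i-1}^*)$ and $g(x_{i+1}^*)$. These $M$ extrema lie in disjoint open intervals and hence give $M$ distinct sets of local extrema of $g$.

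Applying this with $g = p \circ h$ yields that $p \circ h$ has at least $M$ sets of local extrema. On the other hand, the change-of-variables argument shows $p \circ h$ has exactly as many sets of local extrema as $p$, and $p$ — being a polynomial of degree $d$, with $p'$ a polynomial of degree $d - 1$ having at most $d - 1$ real roots — has at most $d - 1$ local extrema. (Here one should note that $p$ is non-constant: if $p$ were constant then $p \circ h$ would be constant, contradicting that it is $\varepsilon$-close to the non-constant $f$ for small $\varepsilon$; and any interval of constancy of $p$ pulls back to an interval of constancy of $p \circ h$, which cannot occur for arbitrarily small $\varepsilon$ near a point where $f$ is, say, strictly increasing — but in fact the crude count "at most $d-1$ extrema" already suffices without worrying about intervals of constancy, since a polynomial with an interval of constancy is constant.) Combining, $M \le d - 1$, hence $d \ge M + 1$.

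The step I expect to be the main obstacle is making the "composition with a homeomorphism preserves the number of sets of local extrema" claim fully rigorous at the level of sets rather than isolated points — in particular, checking that $x^*$ is a local minimizer of $p \circ h$ in $\Omega^\circ$ if and only if $h(x^*)$ is a local minimizer of $p$ in $\Omega_h^\circ$, using that $h$ is an open map between the interiors (a consequence of being a strictly monotone homeomorphism of intervals), and that a local minimizer set of $p \circ h$ maps under $h$ to a local minimizer set of $p$ with the same count. A secondary subtlety is the edge case where some extremum $x_i^*$ of $f$ coincides with an endpoint of $\Omega$; but by the paper's convention local extrema lie in $\Omega^\circ$, so the $M$ extremal sets are interior and the endpoints $x_0, x_{M+1}$ are genuinely distinct from them, so the interval-disjointness used above is clean. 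I would close by remarking that this matches the degree $M + 1$ achieved constructively in Theorem~\ref{th:Nextreme}, so that bound is tight.
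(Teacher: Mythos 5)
Your proposal is correct and follows essentially the same route as the paper: choose $\varepsilon$ below a fixed fraction of the minimal jump $\min_i |f_{i+1}-f_i|$ between consecutive extrema/endpoints, use the triangle inequality to show the approximant's values at those points still strictly alternate, and invoke the extreme value theorem to force $M$ local extrema of the polynomial, hence degree at least $M+1$; the only difference is that you count extrema of $p\circ h$ in $x$-space and transfer the count through $h$, whereas the paper evaluates $p$ directly at the images $y_i=h(x_i)$ and counts extrema of $p$ in $\Omega_h$. One small slip: the intervals $(x_{i-1}^*,x_{i+1}^*)$ are not disjoint for consecutive $i$, but distinctness of the $M$ extrema still follows from their alternating type and values (the paper's own proof glosses over the same point).
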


\begin{proof}
By \Cref{th:Nextreme}, for every $\varepsilon > 0$ there exist a homeomorphism
$h : \Omega \to \Omega_h$ and a polynomial $p$ of degree $M+1$ such that
\[
\sup_{x \in \Omega} |f(x) - p \circ h(x)| < \varepsilon .
\]
Consequently, the same approximation property holds for any degree $d \ge M+1$,
since polynomials of degree $M+1$ are contained in the class of polynomials of
degree at most $d$.

It therefore remains to show that no such approximation is possible when
$d < M+1$.

	Set $\Omega= [x_0, 
	x_{M+1}]$ with $x_0 < x_{M+1}$. Let $f$ be continuous with $M$ sets of
	local extrema. Let $\mathbb{X}=\{X_i\}_{i=1}^M$ denote the ordered
	collection of these sets. Choose an arbitrary
	point $x_i 
	\in X_i$ for all $i=1, \dots, M$.
	Set 
	$$
	\Delta_i = |f(x_{i+1} )
	- f(x_i)|	 \quad \text{for all } i=0,\dots, M.
	$$
	
	Since $f$ is non-constant between consecutive extrema and endpoints, we have
$\Delta_i > 0$ for all $i$. Set 
$$
\varepsilon = 
	\frac{1}{2} \min_{0\leq i \leq M} \Delta_i
	$$
	 and let $p$ and $h$ be such that
	$$
	\sup_{x \in \Omega} \ |f(x) - p \circ h (x)|<\varepsilon.
	$$ 
	
	Equivalently, this can be written as
	\begin{align}\label{eq:ThmStatement}
	\sup_{y \in \Omega_h} \ |f \circ h^{-1}(y) - p(y)|<\varepsilon,
	\end{align}
	where $y = h(x)$. 

	Since $h^{-1}$ is a homeomorphism, $g := f\circ h^{-1}$ has the same number of 
	sets of local extrema as $f$. Denote by $\{y_i\}_{i=1}^{M}$ the
	representatives of the sets of local extrema of $g$, that is, $y_i = h(x_i)$.
	Additionally, set $\Omega_h = [y_0, y_{M+1}]$, where $y_0=h(x_0)$ and
	$y_{M+1} = h(x_{M+1})$. Note that, irrespective of $h$, the function $g$ satisfies the recurrent relation
	
	$$
	|g (y_{i+1}) - g (y_{i})|  = \Delta_i \quad \text{for all 
	} i = 0,\dots, M .
	$$
	
	Using the reverse triangular inequality twice, we have that for all $i=0,\dots, M$,
	\begin{align}\label{eq:differences}
		|p(y_{i+1}) - p(y_i)| \geq \Delta_i - |p(y_{i+1}) - g(y_{i+1})| - 
		|p(y_{i}) - g(y_i)| > 2 
		\varepsilon - \varepsilon - \varepsilon = 0,
	\end{align}
	where the last inequality follows from the definition of $\varepsilon$.
	Thus, the values $p(y_i)$ and $p(y_{i+1})$ are distinct for all $i$.
	
	Given that $g(x_i)$ are evaluations at extrema and endpoints, they 
	must satisfy either recurrent relation $\pm (-1)^i \bigl(g(x_{i+1}) - g(x_i)\bigr) >0$ for all 
	$i=0,\dots, M$.  It follows from \eqref{eq:differences} and the choice of 
	$\varepsilon$ that the evaluations of 
	$p(y_i)$ must satisfy the same recurrent relation. 
	
	Therefore, for each $i=0,\dots,M-1$, either
\[
p(y_i) > p(y_{i+1}) < p(y_{i+2})
\quad \text{or} \quad
p(y_i) < p(y_{i+1}) > p(y_{i+2}).
\]
By the extreme value theorem, this implies that $p$ has at least one local extremum
in each interval $(y_i, y_{i+2})$. Hence, $p$ has at least $M$ distinct local
extrema. Therefore, $p$ is a polynomial of degree $d 
	\geq M+1$.  
	
\end{proof}

	The following is an equivalent statement of~\Cref{Th:MinimalDeg}. 

	\begin{theorem}
	For any compact $\Omega \subset \mathbb{R}$ let $f:\Omega \to 
	\mathbb{R}$ be a continuous non-constant function. If for all $\varepsilon>0$, there exist 
	some homeomorphism $h: \Omega \to \Omega_h$ and a polynomial $p: \Omega_h \to \mathbb{R}$  of 
	degree $d$ with $d-1$ distinct local extrema satisfying
	$$
	\sup_{x \in \Omega} \ |f(x) - p \circ h (x)|<\varepsilon,
	$$ 
	then $f$ has $M$ sets of local extrema, with $M \leq d-1$.
	\end{theorem}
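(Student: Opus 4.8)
The claim is essentially a contrapositive repackaging of \Cref{Th:MinimalDeg}, so the plan is to reduce to it. The key observation is that once we know $f$ has only finitely many sets of local extrema, say $M$ of them, \Cref{Th:MinimalDeg} applies verbatim to the hypothesis and yields $d\ge M+1$, that is, $M\le d-1$; the extra assumption that $p$ has $d-1$ distinct local extrema is then superfluous for the conclusion and only serves to present the statement as a converse of \Cref{th:Nextreme}. Hence the entire content to be proved is that the approximability hypothesis, with the polynomial degree fixed at $d$, already forces $f$ to have \emph{finitely many} sets of local extrema.

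To get finiteness I would argue by contradiction, re-running the counting step of the proof of \Cref{Th:MinimalDeg}. Suppose $f$ had infinitely many sets of local extrema. For an arbitrary $N\in\mathbb{N}$, select $N$ consecutive ones, pick a representative $x_i$ in the $i$-th of them ($i=1,\dots,N$), adjoin the endpoints $x_0,x_{N+1}$ of $\Omega=[x_0,x_{N+1}]$, and set $\Delta_i=|f(x_{i+1})-f(x_i)|$ for $i=0,\dots,N$, which are positive since $f$ is non-constant between consecutive selected extrema and the endpoints. With $\varepsilon:=\tfrac12\min_{0\le i\le N}\Delta_i$, the hypothesis supplies a homeomorphism $h:\Omega\to\Omega_h$ and a polynomial $p$ of degree $d$ with $\sup_{\Omega}|f-p\circ h|<\varepsilon$. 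Passing to $g:=f\circ h^{-1}$ on $\Omega_h$, which has the same extremal structure as $f$ transported by $h$ with representatives $y_i=h(x_i)$ and $|g(y_{i+1})-g(y_i)|=\Delta_i$, two applications of the reverse triangle inequality exactly as in~\eqref{eq:differences} give $|p(y_{i+1})-p(y_i)|>0$ and force the values $p(y_0),\dots,p(y_{N+1})$ to satisfy the alternating relation $\pm(-1)^i\bigl(p(y_{i+1})-p(y_i)\bigr)>0$. By the extreme value theorem $p$ then has a local extremum in each interval $(y_i,y_{i+2})$, hence at least $N$ distinct local extrema, so $d=\deg p\ge N+1$. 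Since $N$ was arbitrary this is absurd, so $f$ has finitely many sets of local extrema, and the reduction above finishes the proof.

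The step I expect to be the real obstacle is the bookkeeping hidden in the phrase ``select $N$ consecutive sets of local extrema'': when the local extremizer sets are genuine closed sets rather than points one must first make sense of ordering them and of their alternating min/max pattern, and one must ensure the selected representatives $x_1<\dots<x_N$ together with the endpoints yield strictly separated values $\Delta_i>0$ obeying the recurrent sign relation required by \Cref{Thm:Chandler} and by~\eqref{eq:differences}. I would settle this using the defining property of an extremizer set --- that $f$ is strictly on one side of its value throughout a punctured neighbourhood --- to rule out the degenerate configurations (two consecutive minimizer sets with no maximizer set between them, or coinciding values) exactly as in the sorting convention adopted in the Notation section; with that in hand, every remaining estimate is a line-by-line transcription of the proof of \Cref{Th:MinimalDeg}.
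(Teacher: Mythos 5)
Your proposal is correct and follows the same route as the paper's proof, which disposes of this theorem in one line as the contraposition of \Cref{Th:MinimalDeg}. The genuinely valuable part of your write-up is that you noticed what that one-liner glosses over: \Cref{Th:MinimalDeg} carries the hypothesis that $f$ has finitely many sets of local extrema ($M\in\mathbb{N}$), whereas the present statement assumes only continuity and non-constancy while its \emph{conclusion} asserts the existence of a finite $M$. A continuous non-constant function on a compact interval can have infinitely many sets of local extrema (e.g.\ $x\mapsto x^{2}\sin(1/x)$ near $0$), so pure contraposition does not by itself deliver the finiteness claim; your contradiction argument --- for arbitrary $N$ select $N$ alternating representatives, re-run the counting step of \eqref{eq:differences}, and conclude $d\geq N+1$ --- is exactly the missing extension and is sound. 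The bookkeeping you flag (ordering accumulating extremizer sets, securing $\Delta_i>0$ at the two endpoint gaps) is real but harmless: since only $N$ of infinitely many extrema are needed, you can discard the domain endpoints entirely and use the outermost two selected representatives as endpoints of a subinterval, over which the supremum bound still holds and where the strict alternation of values follows from the min/max interleaving established in the Notation section. In short, your argument is, if anything, more complete than the one printed in the paper.
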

	\begin{proof}
		The proof follows by contraposition of~\Cref{Th:MinimalDeg}. If $f$ has more than $d-1$ sets of local extrema, then for some $\varepsilon>0$, there do not exist a homeomorphism $h$ and a polynomial $p$ of degree $d$ such that 
		$$
		\sup_{x \in \Omega} \ |f(x) - p \circ h (x)|<\varepsilon.
		$$
	\end{proof}
	
\section{Numerical results}\label{sec:NumRes}

In this section, we present numerical experiments that support our theoretical results. 
For a given target function $f$, we construct approximations of the form 
\[
	\label{eq:nonl_ansatz}
	\hat{f}_{\Theta} = \sum_{i=0}^{N} a_i h_{\Theta}^i(x),
\]
where the nonlinear function $h_\Theta$ was modeled \emph{via} an iResNet. By construction iResNets are
smooth bi-Lipschitz transformations and hence satisfy the assumptions on $h$ in
\Cref{th:dense_set} and \Cref{th:Nextreme}. However, these theorems assume the class of all homeomorphisms, which include non-smooth functions. For this reason, some of the expected results, such as the exact representation in  \Cref{th:Nextreme}, might not be computationally reproducible. 
The iResNet was built using $15$ residual blocks, each composed of $2$ hidden layers of $8$
units each and that use LipSwish as activation function.

We compare the optimization results with standard polynomial approximations, 
\[
	\label{eq:l_ansatz}
	\hat{f} = \sum_{i=0}^{N} a_i x^i.
\]
We demonstrate that optimizing the transformation $h$ 
enhances approximation of a broad class of continuous target functions with different number of local extrema, as described in \Cref{th:interval} and 
\Cref{th:Nextreme}.

\subsection{Approximating continuous functions with one strict local extremum}
\label{subsec:num_Mextrema}

In Example~\Cref{example:closed-form}, we established that any continuous function with a single strict local
extremum can be approximated by a polynomial of degree $2$
composed with a homeomorphism $h$.  
We now provide numerical examples illustrating this result.  

Consider the target function
\begin{equation}
\label{eq:f1}	
f(x) = \exp(x) + \exp(-x), \qquad x \in \Omega = [-10,10],
\end{equation}
which has a single extremum at $x_0=0$.  

Since the minimum occurs at $x_0=0$, we set 
$a_0 = f(x_0)=2$. The coefficient of the linear term is taken as $a_1=0$, as it is unnecessary for capturing the single-extremum structure. Because the remainder $\hat{f}=f-a_0$ is non-negative, we set $a_2=1$.  
The corresponding transformation $h$ is given analytically by~\eqref{eq:T1_func}, that is
\[
h(x) = \operatorname{sign}(x)\,\sqrt{\exp(x)+\exp(-x)-2}.
\]

In addition to this closed-form construction, we also computed a numerical approximation of the form
\[
\hat{f}(x) = \sum_{i=0}^{2} a_i \, h_{\Theta^*}(x)^i,
\]
where $h_\Theta$ is parametrized by an iResNet.  
Given a training set of $P$ equidistant points 
$D=\{x_p\}_{p=0}^{P-1}$, the neural network parameters $\Theta^*$ were obtained from the optimization problem
\begin{align}\label{eq:Objective_nonl}
	\Theta^*  = \arg\min_{\Theta} 
	\left( \sqrt{\frac{1}{P} \sum_{p = 0}^{P-1} \, \big| f(x_p) - \sum_{i=0}^2 a_i \, h_\Theta(x_p)^i \big|^2} \right).
\end{align}
For this example we used $P = 301$ points. 
For the optimization problem we used the root mean squared error (RMSE) and
not the supremum norm, since the latter is not differentiable. 

This nonlinear optimization was solved using the \texttt{Optax}~\cite{Optax:DeepMindJax} implementation of Adam~\cite{Kingma:ICLR2015}, a gradient-based stochastic optimization algorithm.

As a benchmark, we also optimized a second-degree polynomial in the original input space by solving
\[
\mathbf{X}\mathbf{a} = \mathbf{f},
\]
with $\mathbf{a} = [a_0, a_1, a_2]$, $\mathbf{f} = [f(x_0), \dots, f(x_{300})]$,
and the matrix $\mathbf{X}$ is given by $\mathbf{X}^{ip} = x_p^i$.  

The results are reported in \Cref{fig:expx}.  
Interestingly, the learned transformation $h_\Theta$ closely resembles the analytically derived function $h$, despite no explicit constraint enforcing this behavior.  
For validation, we computed the RMSE over a grid of $P=5001$ equidistant points
$\{\hat{x}_j\}_{p=0}^{P-1}$, obtaining a value of $11.146$. The corresponding mean relative
error (MRE) over the validation points is
\[
\text{MRE} = \frac{1}{M} \sum_{j = 0}^{M-1} \, \abs{\frac{ f(\hat{x}_p) - \sum_{i=0}^2 a_i \, h_{\Theta^*}(\hat{x}_p)^i }{f(\hat{x}_p)}} =  0.021 . 
\]

In comparison, achieving a similar accuracy with direct polynomial fitting
requires a polynomial of degree at least $10$.  

\begin{figure}[h!]
	\centering
	\includegraphics[width=\linewidth]{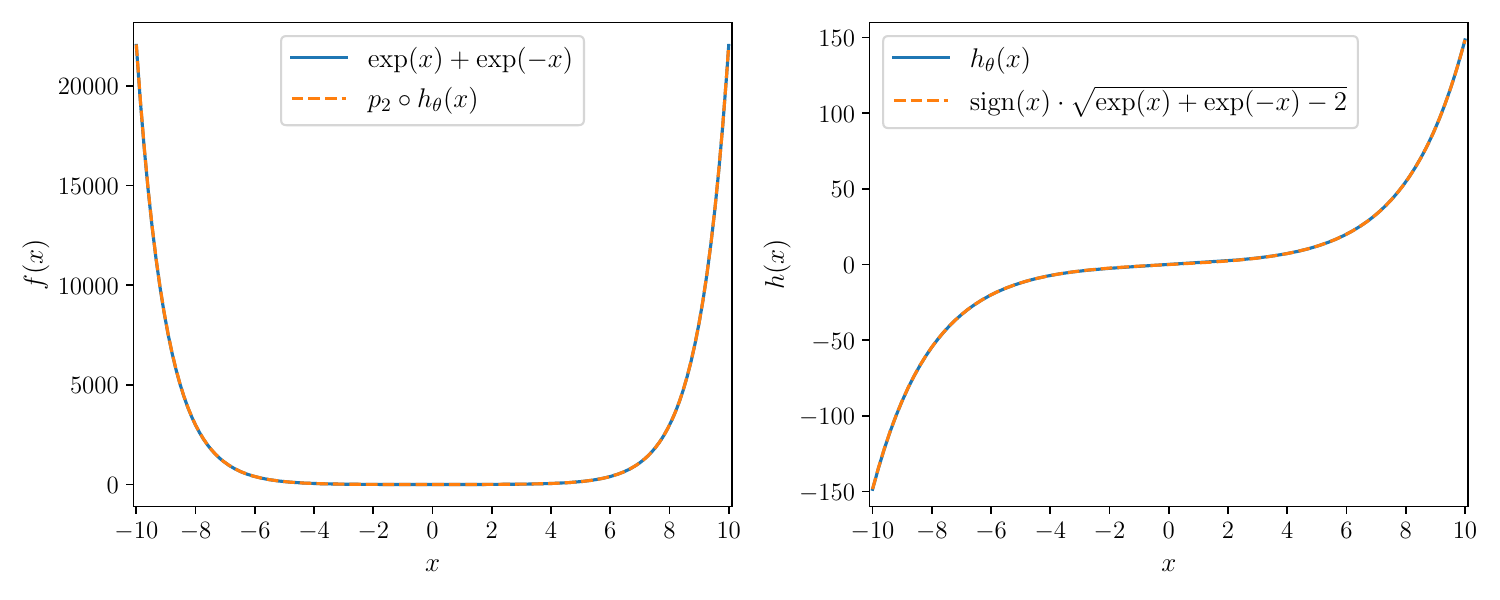}
	\caption{\textbf{Approximation of \eqref{eq:f1} using
	polynomials composed with a homeomorphism.}
		\textit{Left:} Plotted are the target function and the fitted second-order power series expansion $p_2(x) = x^2 +2$ composed with an invertible bi-Lipschitz function $h_\theta$, parametrized by an iResNet with parameters $\theta$.  \textit{Right:} The invertible function that resulted as the solution of the optimization and its comparison with the expected function $h(x) = \text{sign}(x) \cdot \sqrt{f(x) - 2}$.}
	\label{fig:expx}
\end{figure}

\subsection{Approximating non-smooth functions}

Consider the non-differentiable continuous function
\begin{equation}
\label{eq:f2}	
c(x) = 
	\begin{cases}
		1 -(x - 1)^2, \quad &x > 0 \\
		\arctan(-x), \quad &x \leq 0 \\
	\end{cases}
\end{equation}
for $x \in \Omega = [-3,3]$.
This function is continuous over its domain, since both defining
functions are continuous and their values at the discontinuity point match. The
function contains two strict local extrema in its domain, placed at the
discontinuity point $x=0$ and at $x=1$. Therefore, an arbitrarily accurate approximation using
a third-degree polynomial composed with a homeomorphism
$h$ is possible. 

As no analytical choice of $\{a_i\}_{i=0}^3$ is available for this case, the objective of the optimization is to find both the optimal $a_i^*$ and $\Theta^*$. Given a training set of equidistant points 
$D=\{x_p\}_{p=0}^{300}$, the linear coefficients $a_i^*$ and network parameters
$\Theta^*$ were obtained by solving
\begin{align}\label{eq:Objective_nonl2}
	a_i^*, \Theta^* = \arg\min_{a_i, \Theta} 
	\left( \sqrt{\frac{1}{P} \sum_{p = 0}^{P-1} \, \big| f(x_p) - \sum_{i=0}^2 a_i \, h_\Theta(x_p)^i \big|^2} \right).
\end{align}
This nonlinear problem was solved using the following optimization scheme:  
the nonlinear parameters $\Theta$ were updated with Adam. For each choice of $\Theta$, the linear
coefficients $a_i$ were determined by solving the least-squares system
$$
\sum_{i=0}^2 a_i h_\Theta(x)^i \approx f(x), \qquad x \in D,
$$
which amounts to computing the pseudoinverse of the matrix
$$
\mathbf{X}_\Theta^{ip} = h_\Theta(x_p)^i, \qquad i=0,1,2,\;\; p=0,\dots,300.
$$
This pseudoinversion guarantees that the coefficients $a_i$ are optimal for every parameter set $\Theta$.

We illustrate the result of this numerical experiment
in~\Cref{fig:C_fit}. The approximation error is larger in the neighborhood of
$x=0$, since this is the discontinuity point of $f$ and $p_3 \circ h_\theta$ is smooth everywhere by construction ($h_\theta$ is modeled using an iResNet, which is smooth by design).

\begin{figure}[h!]
	\includegraphics[width=\linewidth]{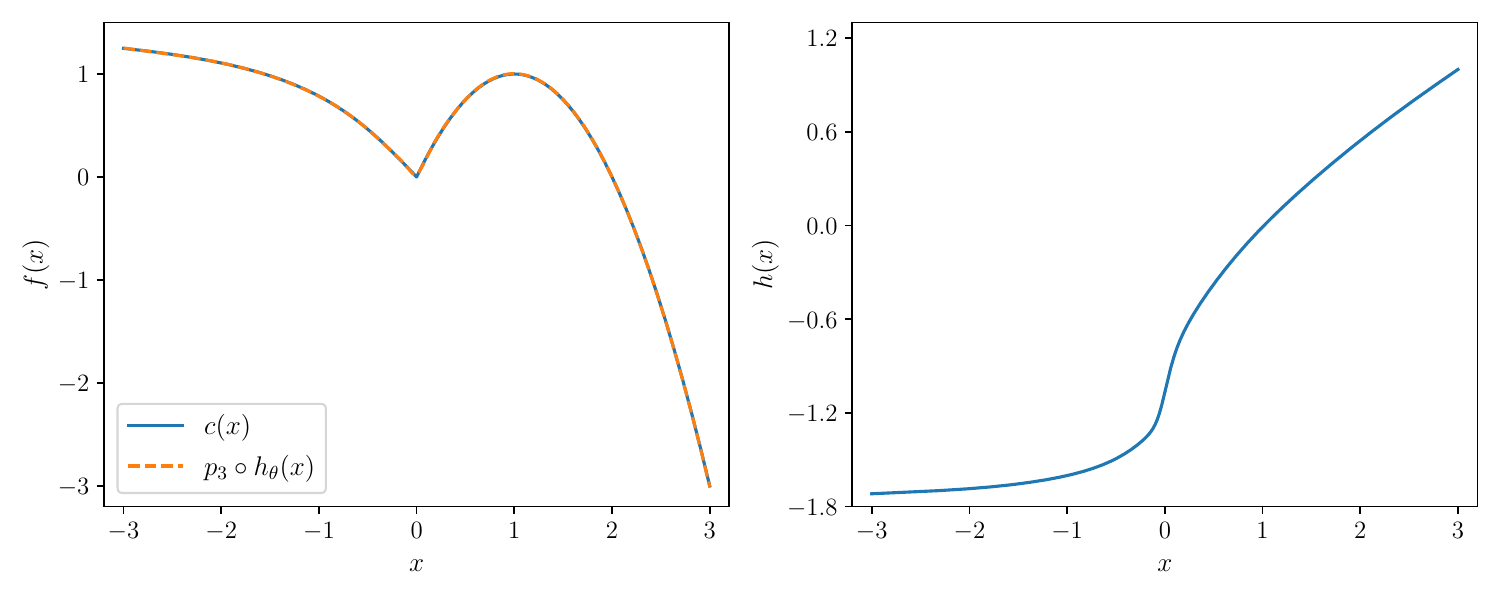}
	\caption{\textbf{Approximation of \eqref{eq:f2} using polynomials composed with a homeomorphism.}\textit{ Left:} The target function and the fitted second-order
	expansion composed with an invertible function. \textit{Right:} The invertible function that resulted as the
	solution of the optimization. }
	\label{fig:C_fit}
\end{figure}

For validation, a polynomial was fitted on the training points and both approaches were evaluated over an equidistant grid of $5001$ points in the domain. In the validation set, the induced set achieved a RMSE of $3.92 \cdot 10^{-3}$ and a maximum absolute error (MAE) of $0.038$. In contrast, the best direct polynomial fit, using a polynomial of degree $80$, yielded 
a RMSE of $6.93 \cdot 10^{-3}$ and a MAE of $0.063$. Both error metrics for the direct fit are approximately an order of magnitude larger than those of the induced set, highlighting its superior performance in approximating non-differentiable functions.

\subsection{Approximating functions with non-strict local extrema}

Last, we demonstrate the effectiveness of the induced set approximation for continuous functions with intervals of constancy. In particular, we look at a target function with a local extrema that is not strict, given by
\begin{align}
	\label{eq:f3}
	f(x) = 
	\begin{cases}
		\exp\left(-\frac{1}{(x-1)^{2}}\right), \quad &\abs{x} > 1 \\
		0, \quad &\abs{x} \leq 1 \\
	\end{cases},
\end{align}
and is defined in the domain $\Omega = [-4,4]$. $f$ is continuous and a set of
local minimum $X^* = [-1, 1]$ is contained in its domain. No other extrema are
present in the domain of $f$. For this reason, a second-degree polynomial
composed with an invertible function $h$ can be used to approximate $f$. 

We used the strategy of Example~\Cref{example:closed-form}. We
selected the second-degree polynomial $p_2$  with coefficients $a_0=f|_{X^*} = 0$,
$a_1=0$ and $a_2=1$. This polynomial was then composed with an invertible
function $h_\Theta$, modelled by an iResNet. The parameters of the neural
network were trained using a grid of $1000$ equidistant  points over the
function domain in the same fashion as of~\eqref{eq:Objective_nonl}. 

\begin{figure}[h!]
	\includegraphics[width=\linewidth]{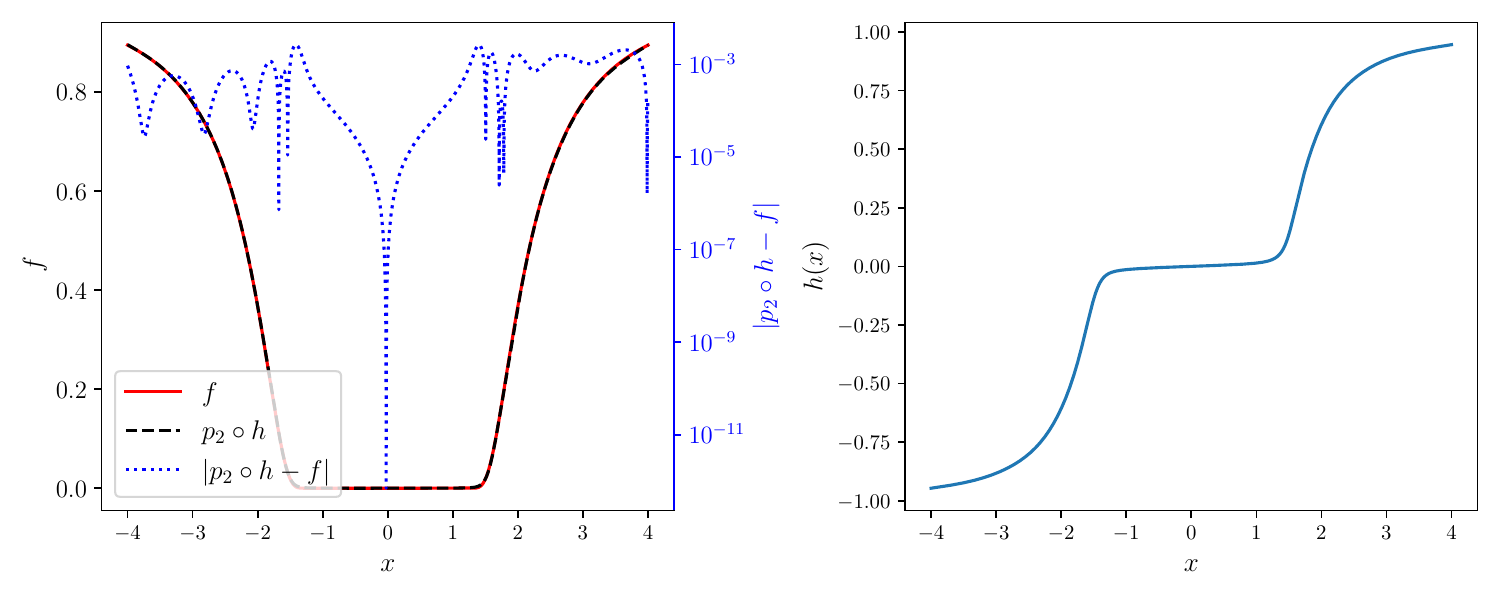}
	\caption{\textbf{Approximation of \eqref{eq:f3} using polynomials composed with a homeomorphism}. \textit{Left:} The target function (solid red) and the fitted second-order expansion
	composed with an invertible function (dashed black). The error of the fit is
	also plotted (dotted blue) for a visualization of the points where the
	fitting achieves higher and lower accuracy.
	\textit{Right:} The obtained invertible transformation $h$ for the fit.}
	\label{fig:minfty}
\end{figure}

The results were validated by comparison to a standard polynomial fit using the
training points and evaluated over a validation grid of $5000$ equidistant
points. The obtained RMSE for the induced polynomial on the validation set was
$9.40 \cdot 10^{-4}$, and the maximum absolute error of this fit was $2.67 \cdot
10^{-3}$. To achieve a comparative accuracy using a polynomial fit, a
polynomial of degree $40$ was required. A polynomial of degree 40 reaches an RMSE of $
9.16 \cdot 10^{-4}$ in the validation grid and a maximum deviation of $2.65
\cdot 10^{-3}$.  

The results of the approximation are shown in \Cref{fig:minfty}. 
In the set of local minimum, the second-order degree induced polynomial approximates the function $f$ exactly only at a single point $x_0 \in X^*$.
The error of the approximation increases as one moves away from
$x_0$ within $X^*$. The learned transformation
$h_{\Theta^*}$ acts by contracting $X^*$ into a
set of negligible measure, that can then be resolved by standard polynomials.

\section{Higher-dimensional applications}
\label{sec:multiD}

Thus far, we have demonstrated that a homeomorphism can induce
a dense set tailored to a specific univariate approximation problem. Under suitable
conditions on the target function, such a transformation enables an exact
finite-dimensional representation.

While these results are already powerful in one dimension, their full potential is realized in higher-dimensional applications. Consider the problem of approximating a target function 
\( f \in C(\Omega) \) over a multidimensional compact and connected domain \( \Omega \subset
\mathbb{R}^D \). A standard approach is to construct a multidimensional
polynomial basis
as the direct product of univariate monomials. Denoting the coordinates of the domain by \( x_\alpha \), \( \alpha = 1, \dots, D \), one typically seeks an approximation of \( f \) in the linear span of
\begin{equation}
	\label{eq:multi_d}
	\left\{ x_1^{i_1} x_2^{i_2} \dots x_D^{i_D} \right\}_{i_1, i_2, \dots, i_D = 0}^{N_D},
\end{equation}
which becomes dense in \( C(\Omega) \) as \( N_D \to \infty \). However, the
number of basis functions \( N_D \) required to achieve a given accuracy
increases exponentially with the dimension \( D \), a manifestation of the curse
of dimensionality. Consequently, constructing dense sets that are tailored to
the approximation problem at hand is of particular importance for high-dimensional
problems.

Such tailored dense sets can be constructed naturally through composition, as established in this work. Let
\begin{equation*}
	h: \Omega \to \Omega_h \subset \mathbb{R}^D
\end{equation*}
be a homeomorphism.
Define new variables \( q_k = h_k(\mathbf{x}) \), \( k = 1, \dots, D \), such that \( h \) performs a coordinate transformation from \( \mathbf{x} \) to \( \mathbf{q} \). Since the dimensionality of the domain was never used in its proof, \Cref{th:dense_set} can easily be extended to higher dimensions to show that the set
\begin{equation}\label{eq:multiDinduced}
	\left\{ q_1^{i_1} q_2^{i_2} \dots q_D^{i_D} \right\}_{i_1, i_2, \dots, i_D = 0}^{\infty}
\end{equation}
is dense in \( C(\Omega) \). 

In what follows, we demonstrate numerically the existence of functions $h$, such
that approximations in the span of $N$ functions of the set \eqref{eq:multiDinduced}
are more accurate than approximations in the span of \eqref{eq:multi_d}.

\subsection{2-D fitting}
Let $\Omega = [-4, 4] \times [-4, 4]$, and consider the target function 
\begin{equation}
\label{eq:f4}
	f : \Omega \to \mathbb{R}, \qquad f(x, y) = \arctan(x)\,\arctan(y),
\end{equation}
which is continuous on $\Omega$.

For fitting, the domain $\Omega$ was discretized using a tensor-product grid of $20$ equidistant points per dimension, and the target function was sampled at these locations. We fitted these data using a polynomial of degree~$2$, denoted by $p_2$, composed with a homeomorphism $h_\Theta$ modelled by an iResNet. The architecture of the iResNet remains the same as previously described in~\Cref{sec:NumRes}, but accepts a two-dimensional input and outputs two-dimensional values.  This construction creates an induced set consisting of six functions. The parameters $\Theta$ were trained to minimize the RMSE, and the linear coefficients of the polynomial expansion were obtained by pseudo-inversion of the evaluation matrix, both on the training grid. This procedure follows the same formulation as~\eqref{eq:Objective_nonl2}, extended to a two-dimensional domain.

For comparison, we fitted a standard polynomial of the form~\eqref{eq:multi_d} of degree~$13$, denoted by $p_{13}$, to the same training data. The polynomial $p_{13}$ comprises $105$ functions. Both models were evaluated on a denser validation grid of $100$ equidistant points per dimension. 

A quantitative summary of both approaches is provided in
\Cref{tab:2d_errors}. The fit based on the dense set induced by the learned
transformation $h_\Theta$ achieves substantially higher accuracy while employing
an order of magnitude fewer basis functions. In particular, both the RMSE and the MAE
are
approximately two orders of magnitude smaller than those of the conventional
polynomial fit, demonstrating the efficiency of the induced dense set in
capturing nonlinear dependencies with compact representations.

Both approximations are plotted in \Cref{fig:2d} against $f$.

\begin{table}[h!]
    \begin{tabular}{lcccc}
        \toprule
        \textbf{Model} & \textbf{Degree} & \textbf{\# Basis Functions} & \textbf{RMSE} & \textbf{MAE} \\
        \midrule
        Polynomial composed with $h_\Theta$ ($p_2 \circ h_\Theta$) & $2$ & $6$ & $3.1 \cdot 10^{-4}$ & $0.0011$ \\
        Standard polynomial ($p_{13}$) & $13$ & $105$ & $2.3 \cdot 10^{-2}$ & $0.163$ \\
        \bottomrule
    \end{tabular}
    \vspace{2.5pt}
    \centering
    \caption{Comparison of two-dimensional polynomial fitting of the target function \eqref{eq:f4}.}
    \label{tab:2d_errors}
\end{table}

\begin{figure}[h!]
	\includegraphics[width=\textwidth]{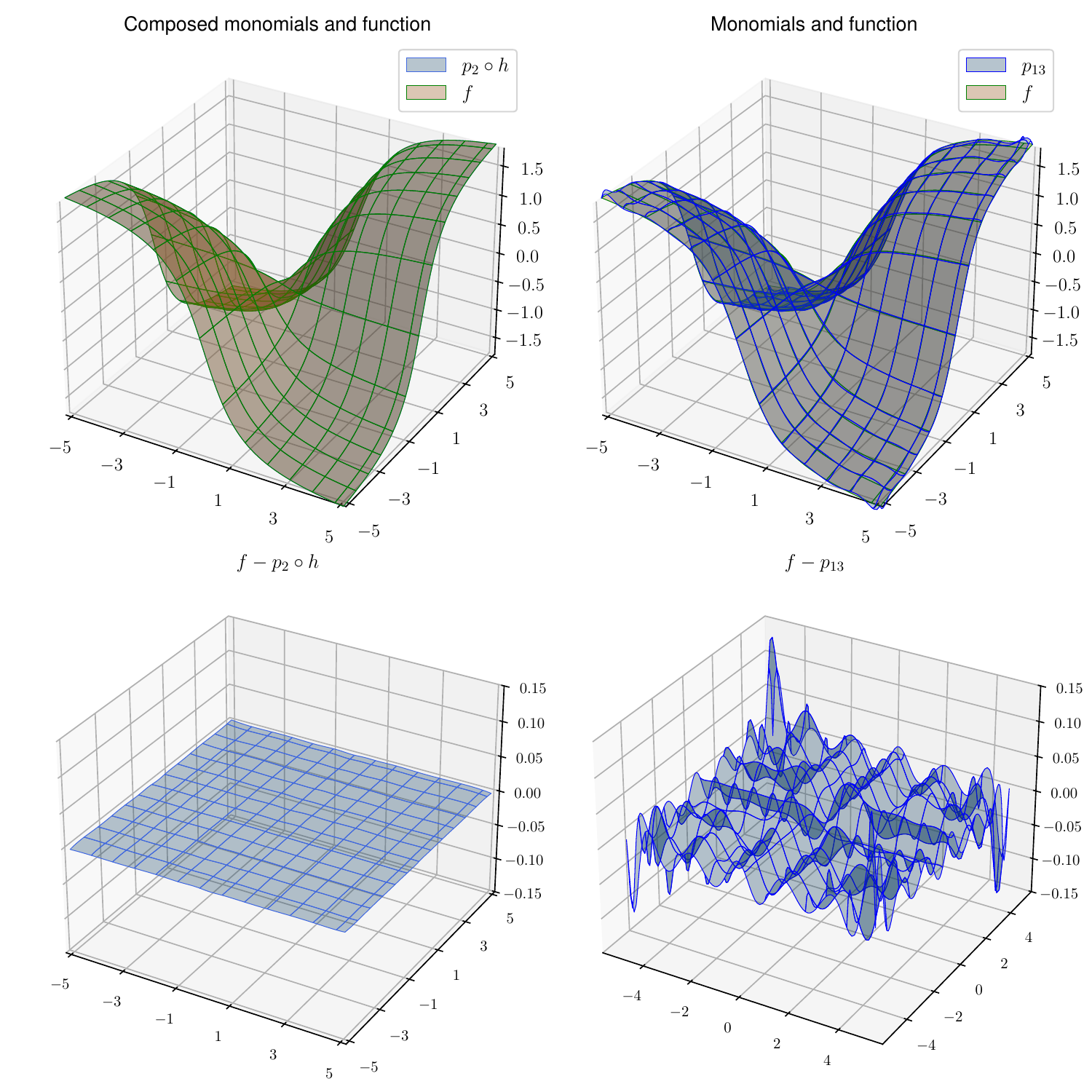}
	\caption{\textbf{Approximation of \eqref{eq:f4} using polynomials and
	polynomials composed with a homeomorphism. }\textit {Top left:} Plotted are the target function and the fitted
	second-order expansion composed with a homeomorphism. \textit{Bottom left:}
	Plotted is the
	difference between the target function and the induced polynomial on the
	validation set. \textit{Top
	right:} Plotted are the target function and the fitted $13$-degree polynomial.  \textit{Bottom
	right:} Plotted is the difference between the target function and the $13$-degree
	polynomial on the validation set.}
	\label{fig:2d}
\end{figure}

\subsection{Potential energy surface fitting}\label{sec:PES}

We now illustrate the utility of dense sets of the form~\eqref{eq:multiDinduced} for constructing high-dimensional surfaces required for solving differential equations arising in molecular structure theory. For a given molecular system, we first consider its electronic Schrödinger equation
\begin{equation}
    \label{eq:ETISE}
    (V + T)\, \psi_i (\mathbf{y};\mathbf{x}) = E_i(\mathbf{x})\, \psi_i (\mathbf{y};\mathbf{x}),
\end{equation}
where $\mathbf{y} \in \mathbb{R}^L$ denotes the electronic coordinates, $\mathbf{x} \in \Omega \subseteq \mathbb{R}^D$ the nuclear coordinates, $V$ is the potential operator describing static interparticle interactions, and $T$ is a second-order differential operator representing the electronic kinetic energy. Equation~\eqref{eq:ETISE} constitutes an infinite-dimensional eigenvalue problem, and the computation of its smallest eigenvalue $E_0(\mathbf{x})$ is central to many applications in molecular physics and chemistry.

The smallest eigenvalue $E_0 : \Omega \to \mathbb{R}$ is a real-valued continuous function that depends parametrically on the nuclear geometry
$\mathbf{x}$.  The representation of $E_0$ as a function of the nuclear geometry
$\mathbf{x}$ is called the potential energy surface (PES). Solving~\eqref{eq:ETISE} for each nuclear geometry is computationally
expensive. In practice, one solves it at a finite set of geometries to obtain a
dataset
\begin{equation*}
    S := \{ (\mathbf{x}_i, E_{0,i}) \}_{i=1}^{N},
\end{equation*}
from which $E_0(\mathbf{x})$ at new configurations is inferred by interpolation and extrapolation. A continuous representation of the PES is desired for solving the nuclear Schrödinger equation and its accuracy impacts the calculations of spectroscopic and dynamical properties of molecules.
Numerous interpolation and fitting strategies have been developed, including traditional
polynomial expansions~\cite{Jensen:JMolSpec128:478, Yachmenev:JCP134:244307,
Jensen:JMolSpec133:438, Tyuterev:CPL348:223} and modern machine-learning
approaches such as neural network potentials~\cite{Morawietz:JCP136:064103,
Natarajan:PCCP17:8356, Manzhos:IJQC115:1012, Saleh:JCP155:144109}. A comprehensive review can be
found in~\citeauthor{Manzhos:CR121:10187}~\cite{Manzhos:CR121:10187}.

Nuclear geometries are often expressed in internal coordinates, which describe relative positions of nuclei and are naturally connected to molecular vibrations. Direct interpolation of the PES in these coordinates can, however, be suboptimal. A common remedy is to introduce nonlinear transformations
\begin{equation}
    \label{eq:PES_transform}
    y_i = g_i(x_i),
\end{equation}
for all $i = 1, \dots, D$, where $g_i : \Omega_i \to \mathbb{R}$, and $\Omega_i$ is the domain of $x_i$. 
Then, the PES is approximated as a polynomial expansion in the transformed variables,
\begin{equation}
    \label{eq:PES_series}
    f(\mathbf{x}) = \sum_{i_1 + \dots + i_D = 0}^{N} c_{i_1, \dots, i_D}\,
    y_1^{i_1}(\mathbf{x}) \cdots y_D^{i_D}(\mathbf{x}),
\end{equation}
where $c_{i_1, \dots, i_D}$ are the expansion coefficients.  

In the PES literature, the functions $g_i$ are chosen based on expert intuition and are always strictly monotonic. However, to the best of our knowledge, no formal justification for this constraint has been provided, beyond empirical success. We propose that the underlying rationale can be understood through the lens of the density result presented in~\Cref{th:dense_set}. Specifically, we refer to the fact that composing the multidimensional polynomials with a homeomorphism yields a set that remains dense. This provides a principled explanation for the enhanced convergence of the power series in the transformed variables $\mathbf{y}$, particularly when the transformation encapsulates the behavior of the PES. 

The class of coordinate-wise strictly monotonic transformations $g_i$ in~\eqref{eq:PES_transform} can be viewed as a subclass of the family of more general multidimensional homeomorphisms $\mathbf{q} = h(\mathbf{x})$, where $h: \Omega \to \Omega_h$, with $\Omega_h \subseteq \mathbb{R}^D$. In this broader setting, the transformation is constructed jointly across all coordinates. The PES can then be approximated as a power series of the variable $\mathbf{q}$, following the same structure as~\eqref{eq:PES_series}.  
Here, we propose to learn such a transformation directly, yielding a dense set induced that is specifically adapted to the target PES.

We illustrate the potential of this approach by fitting the PES of the H$_2$S molecule. In this example, the internal coordinates $\mathbf{x} = (x_0, x_1, x_2)$ correspond to
the two
H-S bond lengths and the H-S-H bond angle. The radial
coordinates span $[0,\infty)$ and the angular coordinate $[0,\pi]$. In practice,
the radial domains are truncated to finite intervals $[a,b]$ with $b > a > 0$,
restricting attention to physically relevant configurations and yielding a
compact domain suitable for polynomial approximation.

As reference data, we used the analytical PES of~\citeauthor{Azzam:MNRAS460:4063}~\cite{Azzam:MNRAS460:4063} to generate synthetic samples. Each coordinate was uniformly sampled, and a tensor-product grid of $40$ equidistant points per coordinate was used to evaluate the reference values of the PES. For the radial coordinates, we sampled the interval $[0.9, 3.5]$ \AA, and for the bond angle, $[0,\pi]$. Points corresponding to high-energy regions ($V > 4 \cdot 10^4~\text{cm}^{-1}$) were discarded to avoid numerical instabilities of the previous fitting. The potential minimum was explicitly included to ensure accurate reproduction of the constant term.  
A validation set was generated using a denser grid of $100$ equidistant points per coordinate, constructed analogously.

The original PES was built using the transformed variables defined as
\begin{align*}
    y_0 &= 1 - \exp[-\alpha_0 (x_0 - \beta_0)], \\
    y_1 &= 1 - \exp[-\alpha_1 (x_1 - \beta_1)], \\
    y_2 &= \cos(x_2) - \cos(\beta_2),
\end{align*}
where $\{\alpha_i\}_{i=0,1}$ control the width of the PES minima and $\{\beta_j\}_{j=0,1,2}$ specify the equilibrium configuration. We remark that the three transformations are indeed strictly monotonic in their domain.

We modeled the transformation $h_\Theta$ using an iResNet (using the same architecture previously described, but $3$ dimensions into $3$ dimensions) and considered the induced variables
\begin{equation*}
    \mathbf{q} = h_\Theta(\mathbf{x}).
\end{equation*}
The PES was then approximated using the polynomial expansion~\eqref{eq:PES_series} in the learned variables.

For comparison, we also fitted a conventional polynomial directly in the internal coordinates $\mathbf{x}$ using the same dataset.  
Table~\ref{tab:pes_results} summarizes the obtained results.

\begin{table}[h!]

	\begin{tabular}{lcccc}
	\toprule
	\textbf{Variable representation} & \textbf{Polynomial degree} & \textbf{\# functions} & \textbf{RMSE ($\text{cm}^{-1}$)} & \textbf{MAE ($\text{cm}^{-1}$)} \\
	\midrule
	Learned variables $\mathbf{q} = h_\Theta(\mathbf{x})$ & 4 & 35 & $18.93$ & $219.18$ \\
	Internal coordinates $\mathbf{x}$ & 18 & 1330 & $105.54$ & $2216.09$ \\
	\bottomrule
	\end{tabular}
	\vspace{2.5pt}
	\centering
	\caption{Comparison of PES fitting performance for H$_2$S using different variable representations. Errors are reported on the validation grid.}
	\label{tab:pes_results}
\end{table}

The dense set induced by the learned homeomorphism \(h_\Theta\) yields an order-of-magnitude improvement in fitting accuracy while requiring nearly two orders of magnitude fewer functions. This highlights the efficiency and enhanced convergence of dense sets adapted to the underlying structure of the potential energy surface.

\section{Conclusions}
In this work, we developed a theory for constructing families of dense sets in the space of continuous functions by composing known dense sets with homeomorphisms. 

We highlighted the advantages of such induced dense sets for approximation
problems. In \Cref{th:Nextreme}, we established the existence of a finite
dimensional approximation of any univariate continuous target function with
finitely many sets of local extrema. This approximation was constructed by composing a
polynomial with a homeomorphism. The
degree of the polynomial is determined by the number of maxima and minima of the target function. All
theoretical claims were validated through numerical experiments using iResNets
to model the function \( h \), demonstrating orders-of-magnitude improvements in
accuracy over standard polynomial approximations.

Similarly, dense sets on higher-dimensional domains can be generated by composing multivariate polynomials
with homeomorphisms. We illustrated the benefits of these induced dense sets by
fitting the potential energy surface of the H\(_2\)S molecule, showing that the
induced set achieves substantially higher accuracy with at least an order of
magnitude fewer terms than a direct polynomial fit.

Overall, the proposed framework provides a new way to improve the accuracy of approximating continuous functions: rather than increasing the dimension of the approximation space within a fixed dense set, one can optimize the choice of the dense set itself to achieve more efficient and accurate approximations.
\section*{Acknowledgments}
The authors would like to acknowledge Emil Vogt, Jochen Küpper and the CFEL
Controlled Molecule Imaging group for valuable scientific discussion. The
authors would like to acknowledge David L. Bishop for suggesting a reference
that was central to the proof of~\Cref{Thm:Chandler}.

This work was supported by Deutsches Elektronen-Synchtrotron DESY, a member of the Helmholtz Association (HGF), including the Maxwell computational resource operated at DESY, by the Data Science in Hamburg HELMHOLTZ Graduate School for the Structure of Matter (DASHH, HIDSS-0002), and by the Deutsche Forschungsgemeinschaft (DFG) through the cluster of excellence ``Advanced Imaging of Matter'' (AIM, EXC~2056, ID~390715994).

\section*{Author contributions}

The project was conceived by A.F.C. The theoretical development and
formalization of the main results were carried out jointly by A.F.C. and Y.S.
Numerical computations and figure generation were performed by A.F.C. Both
authors contributed to the interpretation of the results and to writing the
manuscript.

\section*{Competing interests}
The authors declare that they have no conflict of interests.
\section*{Declaration of generative AI and AI-assisted technologies in the manuscript preparation process}
During the preparation of this work the authors used large language models in
order to improve the language and flow in some paragraphs. After using these models, the authors reviewed and
edited the content as needed and take full responsibility for the content of
the published article.
\printbibliography
\end{document}